\documentclass[preprint,10pt,3p]{elsarticle}
\usepackage{amsmath,amsthm,amsfonts,amssymb}
\usepackage[bookmarksnumbered, colorlinks, plainpages]{hyperref}
\usepackage{xcolor}
\usepackage{sectsty}
\usepackage{comment}
\usepackage{tabularx,booktabs}

\usepackage{natbib}
\definecolor{astral}{RGB}{46,116,181}
\sectionfont{\color{astral}}
\linespread{1.0}
\newtheorem{theorem}{Theorem}[section]

\newtheorem{corollary}{Corollary}[section]
\newtheorem{proposition}{Proposition}[section]

\usepackage{hyperref}
\journal{Elsevier}

\newcommand{\ro}[1]{\textcolor{black}{#1}} 
\newcommand{\rt}[1]{\textcolor{black}{#1}}

\definecolor{fgreen}{rgb}{0.13, 0.55, 0.13}
\newcommand{\fb}[1]{\textcolor{black}{#1}} 

\newcommand{\ra}[1]{\mathrm{rank}({#1})}  

\usepackage{amsmath}
\usepackage{amsfonts}
\usepackage{amssymb,enumerate}
\usepackage{color}
\usepackage{xcolor}

\def\k{{\rm{ker}}}

\def\rg{{\rm{ran}}}
\def\n{{\rm{null}}}

\numberwithin{equation}{section}
\begin{document}
\begin{frontmatter}\title{On the invertibility of matrices with a double saddle-point structure}
	
	\author[add1,add3]{Fatemeh P. A. Beik}
	\ead{f.beik@vru.ac.ir}
	\author[add2]{Chen Greif}
	\ead{greif@cs.ubc.ca}
	 \author[add3]{Manfred Trummer}
	 \ead{trummer@sfu.ca}

\address[add1]{Department of Mathematics, Vali-e-Asr University of Rafsanjan, P.O. Box 518, Rafsanjan, Iran}
\address[add2]{Department of Computer Science, The University of British Columbia, Vancouver, Canada V6T 1Z4}
\address[add3]{Department of Mathematics, Simon Fraser University, Burnaby, BC V5A 1S6, Canada}
	
	\begin{abstract}
We establish necessary and sufficient conditions for invertibility of symmetric three-by-three block matrices having a double saddle-point structure \fb{that guarantee the unique solvability of double saddle-point systems}. We consider various scenarios, including the case where all diagonal blocks are allowed to be rank deficient. Under certain conditions related to the nullity of the blocks and intersections of their kernels, an explicit formula for the inverse is derived.     
	\end{abstract}
	
	\begin{keyword}
matrix inversion, invertible matrix, \fb{solvability,} double saddle-point systems, nullity
	\end{keyword}

\end{frontmatter}
\noindent 2020 {\it Mathematics subject classification\/}: 	
15A09,  
15A03,  
15A06 

\section{Introduction} \label{sec:intro}

We consider $\ell \times \ell$ double saddle-point matrices of the form
\begin{equation}
	\mathcal{K} = \begin{bmatrix}
			A&{{B^T}}&0\\
			B&{ - D}&{{C^T}}\\
			0&C&E
	\end{bmatrix},
\label{eq:coef}
\end{equation}
where $A\in \mathbb{R}^{n\times n},D\in \mathbb{R}^{m\times m}$, $E\in \mathbb{R}^{p\times p}$ and $\ell=n+m+p$. Given $b \in \mathbb{R}^{\ell}$, these matrices and the corresponding linear systems with solution vector $u\in \mathbb{R}^{\ell}$,
\begin{eqnarray}
\mathcal{K}u=b,
\label{eq:system}
\end{eqnarray} 
arise in a variety of applications in computational science and engineering, and their numerical solution has been the subject of much interest and investigation in recent years. Block-tridiagonal linear systems of equations of the form \eqref{eq:system} arise in the finite element or finite difference discretization of the coupled Stokes--Darcy flow equations \cite{BB2022, Cai, Chid,GH2023},
the treatment of mixed and mixed-hybrid formulations of second--order elliptic equations, elasticity, and liquid crystal problems \cite{BB2018,BB20182,Boffi,Maryska,Ramage}, poromechanical equations \citep{ffjct2019},  PDE-constrained optimization problems \citep{pw2012, rdw2010}, and several other important applications. 

The leading block of $\mathcal{K}$ is often symmetric positive definite or symmetric positive semidefinite. The matrices $D$ or $E$ or both may be zero, depending on the application. Permutations may lead to a different block structure of the matrix and additional considerations in the design of numerical solvers, depending among other factors on the dimensions of the blocks and their ranks. 

The main diagonal blocks $A$, $D$, and $E$ in \eqref{eq:coef} are assumed to be symmetric positive (semi)definite in most cases. This implies that the matrix $\mathcal{K}$ is indefinite, and numerically solving the system \eqref{eq:system} may be challenging, especially if the matrix is large and sparse, and iterative methods \cite{SaadBook}  are required.  \rt{It is worth mentioning  that,  under certain assumptions on the block matrices, several preconditioning techniques have been proposed  to accelerate the convergence of Krylov subspace methods  for solving the double (multiple) saddle-point linear system \eqref{eq:system} or its permuted forms, see \cite{Balani2,Balani,BB2018,BB2022,Cai,Chid,Pearson,Song} and the references therein. Our present study, however, does not focus on developing preconditioning techniques. Here,} we are interested in investigating conditions on invertibility of matrices of the form \eqref{eq:coef}, \fb{which ensure the existence of a unique solution for \eqref{eq:system}}. We will make nonrestrictive  assumptions on the rank structure of the blocks of the matrix $\mathcal{K}$, and study invertibility of $\mathcal{K}$ given by \eqref{eq:coef}. 

Some results on invertibility of double saddle-point matrices exist in the literature, but to the best of our knowledge they are more limited in scope than the results we present in this paper. In \cite[Propositions 2.1--2.3]{XL2020}, considering the specific situation where $D$ and $E$ are both zero matrices, the authors provide some conditions for invertibility, based either on assuming full row rank of $B$ and $C$ or assuming zero-only intersections of the kernels of some of the blocks. Some additional conditions on invertibility are provided in \cite{BG2022}, where all diagonal blocks are assumed to be potentially nonzero. 

An outline of the remainder of the paper follows. 
In section \ref{sec:inv}, we study the nonsingularity of $\mathcal{K}$ under the assumption that all  three block diagonal matrices $A,D$, and $E$ are possibly rank deficient.
We further focus on the case where the leading block $A$ has a specific nullity in section \ref{sec3}, where an explicit formula for $\mathcal{K}^{-1}$ is also derived.
Finally,  we draw some conclusions in Section \ref{sec4}. \\

\noindent  {\bf Notation.} Let $W$ be a square matrix. 
We use the notation $W\succ(\succeq)~0$ when $W$ is a symmetric (semi)-positive definite matrix. Given a matrix $M$, \fb{the range and kernel of $M$ are respectively denoted by $\rg(M)$ and $\k(M)$.  The notations $\ra{M}$ and $\n(M)$ stand for the dimensions of $\rg(M)$ and $\k(M)$, respectively.}
Given vectors $x$,  $y$ and $z$ of dimensions $n$, $m$ and $p$, we use {\textsc {Matlab}} notation $[x;y;z]$ to denote a column vector of dimension $n+m+p$.

\section{Necessary and sufficient conditions on invertibilty}\label{sec:inv}

In this section we present necessary and sufficient conditions for the invertibility of  $\mathcal{K}$ under various assumptions including the case where the matrices $A$, $D$ and $E$ are allowed to be singular simultaneously. We first recall a result that provides necessary conditions for nonsingularity of $\mathcal{K}$ with respect to its blocks {and sufficient conditions for the invertibilty of $\mathcal{K}$} under stricter assumptions.
\begin{proposition}{\rm \cite[Proposition 2.1]{BG2022}}\label{prop1.1}
	The following conditions are necessary for $\mathcal{K}$ to be invertible:
\begin{itemize}
	\item [$(i)$] $\k(A) \cap \k(B) = \{0\}$;
	\item [$(ii)$] $\k(B^T) \cap \k(D)\cap \k(C)= \{0\}$;
	\item [$(iii)$] $\k(C^T) \cap \k(E) = \{0\}$.
\end{itemize}
In the case where $A$ is nonsingular, sufficient conditions for
$\mathcal{K}$ to be invertible are that $S_1=D+BA^{-1}B^T$ and $S_2=E+C\ro{S}_1^{-1}C^T$ are invertible.
\end{proposition}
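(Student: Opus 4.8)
The plan is to treat the two assertions separately, both by elementary block manipulations. For the necessary conditions I would argue by contraposition: if any one of the three kernel intersections is nontrivial, I exhibit a nonzero vector in $\k(\mathcal{K})$. If $0\neq x\in\k(A)\cap\k(B)$, then $u=[x;0;0]$ gives $\mathcal{K}u=[Ax;Bx;0]=0$; if $0\neq y\in\k(B^T)\cap\k(D)\cap\k(C)$, then $u=[0;y;0]$ gives $\mathcal{K}u=[B^Ty;-Dy;Cy]=0$; and if $0\neq z\in\k(C^T)\cap\k(E)$, then $u=[0;0;z]$ gives $\mathcal{K}u=[0;C^Tz;Ez]=0$. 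In each case $\mathcal{K}$ has a nontrivial null space and is therefore singular. This part is entirely routine; the only point to watch is the sign of the $D$ block, which plays no role.

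For the sufficient conditions I would use block Gaussian elimination (Schur complements). Since $A$ is nonsingular, one has the factorization
\[
\mathcal{K}=\begin{bmatrix} I & 0 & 0\\ BA^{-1} & I & 0\\ 0 & 0 & I\end{bmatrix}
\begin{bmatrix} A & 0 & 0\\ 0 & -S_1 & C^T\\ 0 & C & E\end{bmatrix}
\begin{bmatrix} I & A^{-1}B^T & 0\\ 0 & I & 0\\ 0 & 0 & I\end{bmatrix},\qquad S_1=D+BA^{-1}B^T,
\]
whose outer factors are unit block triangular, hence invertible. Therefore $\mathcal{K}$ is invertible if and only if $A$ and the trailing $2\times2$ block $\mathcal{S}=\begin{bmatrix}-S_1 & C^T\\ C & E\end{bmatrix}$ are both invertible. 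Assuming in addition that $S_1$ is nonsingular, I apply the same device to $\mathcal{S}$: its Schur complement with respect to $-S_1$ equals $E-C(-S_1)^{-1}C^T=E+CS_1^{-1}C^T=S_2$, so $\mathcal{S}$ is invertible if and only if $S_2$ is. Chaining the two equivalences shows that invertibility of $A$, $S_1$ and $S_2$ implies that of $\mathcal{K}$, which is the claimed implication. Equivalently, one can package both elimination steps into a single congruence $\mathcal{K}=L\,\mathrm{diag}(A,-S_1,S_2)\,L^T$ with $L$ unit block lower triangular; this is reusable when an explicit formula for $\mathcal{K}^{-1}$ is sought later.

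I do not expect a real obstacle, since the statement is standard matrix algebra. The only places that need care are keeping track of the minus sign in the $-S_1$ block when forming the second Schur complement (so that $S_2=E+CS_1^{-1}C^T$ rather than $E-CS_1^{-1}C^T$), and recognizing that the three conditions in the first assertion are only necessary, so no attempt should be made to reverse that implication here.
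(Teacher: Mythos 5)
Your proof is correct. Note, however, that the paper does not prove this proposition at all: it is quoted verbatim from \cite[Proposition 2.1]{BG2022} as a known result, so there is no internal proof to compare against. Your two-part argument is the standard one and almost certainly the intended one: necessity by exhibiting explicit null vectors $[x;0;0]$, $[0;y;0]$, $[0;0;z]$ (your block products are all correct, including the harmless sign on $-D$), and sufficiency by a two-stage block elimination, first forming $S_1=D+BA^{-1}B^T$ as the Schur complement of $A$ and then $S_2=E+CS_1^{-1}C^T$ as the Schur complement of $-S_1$, with the sign handled correctly so that $E-C(-S_1)^{-1}C^T=S_2$. The only caveat worth recording is that your closing remark packaging everything as a congruence $\mathcal{K}=L\,\mathrm{diag}(A,-S_1,S_2)\,L^T$ uses the symmetry of $A$ and $S_1$ (so that the upper factor is the transpose of the lower one); this holds in the paper's symmetric setting but is not needed for the implication you were asked to prove, for which the plain block LDU factorization already suffices. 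Incidentally, the same two devices reappear in the paper itself: kernel-vector arguments drive the proof of Theorem 2.1, and block factorizations of the type \eqref{s5} drive Section 3, so your approach is fully consistent with the paper's toolkit.
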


We now extend the results of Proposition~\ref{prop1.1}. In particular, it turns out that the existence of $\mathcal{K}^{-1}$
can be concluded without checking the invertibility of  $S_1=D+BA^{-1}B^T$ and $S_2=E+C\ro{S}_1^{-1}C^T$ (which may not even be  defined if $A$ is singular), provided that certain conditions on the blocks of $\mathcal{K}$ hold. In the theorems that follow, we show that \rt{the kernel of $\mathcal{K}$ is trivial} to establish the existence of   $\mathcal{K}^{-1}$.
 
\begin{theorem}
Let $\mathcal{K}$ be given by \eqref{eq:coef} such that $A\succeq 0,D\succeq 0, E\succeq 0$ and 
\begin{equation}\label{ker3}
\mathbb{K}:=\k(B^T) \cap \k(D)\cap \k(C)=\{0\}.
\end{equation}
Then, the following statements hold:
\begin{enumerate}[$(1)$]
	\item If $A\succ 0$ and $\k(C^T) \cap \k(E) =\{0\}$, then $\mathcal{K}$ is invertible;
	\item If $E\succ 0$ and $\k(A) \cap \k(B)=\{0\}$, then $\mathcal{K}$ is invertible;
	\item If $\rg(B) \cap \rg(C^T)=\{0\},$ $\k(C^T) \cap \k(E) =\{0\}$ and $\k(A) \cap \k(B)=\{0\}$, then $\mathcal{K}$ is invertible.
\end{enumerate}	
\label{thm1}
\end{theorem}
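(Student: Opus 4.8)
Since $\mathcal{K}$ is symmetric, invertibility is equivalent to $\k(\mathcal{K})=\{0\}$, so in every case the plan is to take $[x;y;z]\in\k(\mathcal{K})$ — that is,
\begin{align}
Ax + B^T y &= 0,\label{pf:e1}\\
Bx - D y + C^T z &= 0,\label{pf:e2}\\
Cy + E z &= 0,\label{pf:e3}
\end{align}
and deduce $x=0$, $y=0$, $z=0$. The standard trick is to form the quadratic form $x^T\eqref{pf:e1} + y^T\eqref{pf:e2} + z^T\eqref{pf:e3}$: the off-diagonal coupling terms cancel in pairs (because of the $B,B^T$ and $C,C^T$ symmetry together with the sign on $D$), leaving $x^TAx - y^TDy + z^TEz = 0$. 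Wait — the middle block is $-D$, so this reads $x^TAx + z^TEz = y^TDy$; I would instead pair signs more carefully, e.g. use $x^T\eqref{pf:e1} - y^T\eqref{pf:e2} + z^T\eqref{pf:e3}$, giving $x^TAx + y^TDy + z^TEz = 0$. Since all three blocks are positive semidefinite, each term vanishes, so $x\in\k(A)$, $y\in\k(D)$, $z\in\k(E)$.

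**Case (1): $A\succ 0$.** Here $x^TAx=0$ forces $x=0$ immediately. Then \eqref{pf:e1} gives $B^Ty=0$, so $y\in\k(B^T)$; combined with $y\in\k(D)$ and with $Cy = -Ez$ this needs a little more. From \eqref{pf:e2} with $x=0$: $C^Tz = Dy$; but $y\in\k(D)$ so $C^Tz=0$, i.e. $z\in\k(C^T)$. Then \eqref{pf:e3} gives $Ez=-Cy$, and pairing $z$ with \eqref{pf:e3} (or noting $z\in\k(E)$ from above and $z\in\k(C^T)$) the hypothesis $\k(C^T)\cap\k(E)=\{0\}$ yields $z=0$. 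Finally \eqref{pf:e3} becomes $Cy=0$, so $y\in\k(B^T)\cap\k(C)\cap\k(D)=\mathbb{K}=\{0\}$ by \eqref{ker3}, hence $y=0$.

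**Case (2): $E\succ 0$.** This is the mirror image: $z=0$ directly; then \eqref{pf:e3} gives $Cy=0$, \eqref{pf:e2} gives $Bx = Dy = 0$ since $y\in\k(D)$, so $x\in\k(B)\cap\k(A)=\{0\}$, hence $x=0$; then \eqref{pf:e1} gives $B^Ty=0$ and again $y\in\mathbb{K}=\{0\}$.

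**Case (3): the range condition.** Now we only know $x\in\k(A)$, $y\in\k(D)$, $z\in\k(E)$. Since $y\in\k(D)$, equation \eqref{pf:e2} collapses to $Bx + C^Tz = 0$, i.e. $Bx = -C^Tz \in \rg(B)\cap\rg(C^T) = \{0\}$, so $Bx=0$ and $C^Tz=0$. Then $x\in\k(A)\cap\k(B)=\{0\}$ gives $x=0$, and $z\in\k(C^T)\cap\k(E)=\{0\}$ gives $z=0$; finally \eqref{pf:e1}/\eqref{pf:e3} reduce to $B^Ty=0$ and $Cy=0$, so $y\in\mathbb{K}=\{0\}$.

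**Main obstacle.** There is no deep obstacle; the whole argument is the semidefinite-energy cancellation followed by chasing the three block equations. The one point requiring care is getting the signs right when forming the combined quadratic form so that the $-D$ block contributes $+y^TDy$ rather than $-y^TDy$ (otherwise semidefiniteness is not directly usable); and, in each case, making sure every use of a kernel-intersection hypothesis is actually justified by the equations already derived — in particular that in case (1) one genuinely obtains $z\in\k(C^T)$ before invoking $\k(C^T)\cap\k(E)=\{0\}$, and symmetrically in case (2). I would also remark that conditions $(i)$–$(iii)$ of Proposition~\ref{prop1.1} are necessary, so these sufficient conditions are close to sharp.
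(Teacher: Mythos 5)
Your proposal is correct and follows essentially the same route as the paper: both derive the identity $x^TAx+y^TDy+z^TEz=0$ from a kernel vector (the paper by multiplying the middle equation by $y^T$ and substituting $B^Ty=-Ax$, $Cy=-Ez$; you by a signed sum of the three equations, which is equivalent), conclude $x\in\k(A)$, $y\in\k(D)$, $z\in\k(E)$, and then chase the block equations through the three kernel/range hypotheses. The only cosmetic difference is that the paper establishes $y\in\mathbb{K}$, hence $y=0$, before splitting into the three cases, whereas you conclude $y=0$ at the end of each case; the logic is the same.
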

\begin{proof}
	Let $\mathcal{K}\bar u=0$ where $\bar u=[ x;  y;  z]$. As a result, we have		
	\begin{subequations}\label{hom}
		\begin{align}
			Ax+B^Ty~~~~~~~~& = 0\label{hom1} ; \\
			Bx-Dy+C^Tz & = 0\label{hom2} ; \\
			Cy+Ez~&= 0. \label{hom3}
		\end{align}
	\end{subequations}
Multiplying \eqref{hom2} on the left by $y^T$, we get 
\begin{equation}\label{eq1.3}
y^TBx-y^TDy+y^TC^Tz=0.
\end{equation}
Form Eqs.\  \ro{\eqref{hom1} and \eqref{hom3}}, we respectively obtain 
\[
B^Ty=-Ax \quad \text{and} \quad Cy=-Ez.
\]
Substituting the above relations into \eqref{eq1.3}, we obtain
\begin{equation}\label{eq1.4}
	x^TAx+y^TDy+z^TEz=0.
\end{equation}
The semidefiniteness of $A,D$, and $E$ yields
\[
x\in \k(A),~y\in \k(D),~  \text{and\ }  z\in \k(E).
\]
From \eqref{hom1} and $x\in \k(A)$, we deduce that $y\in \k(B^T)$. Also, Eq.\  \eqref{hom3} together with $z\in \k(E)$ imply that $y \in \k(C)$. Hence, we conclude that $y\in \mathbb{K}$ where $\mathbb{K}$ is defined by \eqref{ker3}. By the assumption \eqref{ker3}, this ensures  that $y$ is a zero vector. Consequently, Eqs.\  \eqref{hom} reduce to 
	\begin{equation}
	\label{homn2}
		Bx+C^Tz =0.
	\end{equation}

\fb{To conclude the proof, we consider three cases
corresponding to statements (1)--(3) in the theorem.}

\begin{itemize}
	\item [Case (1).] If $A\succ 0$, then $x=0$ by \eqref{eq1.4}. Hence, \eqref{eq1.4} and \eqref{homn2} imply that $z\in \k(C^T) \cap \k(E)=\{0\}$. Therefore, we can conclude that statement ($1$) of the theorem holds. 
	\item [Case (2).] If  $E\succ 0$, by Eq.\  \eqref{eq1.4}, then $z=0$, \ro{which implies} that $x\in \k(A) \cap \k(B)=\{0\}$. Now it is immediate to  deduce the statement ($2$).
	\item [Case (3).] By Eq.\  \eqref{homn2}, we have $Bx=-C^Tz.$
	This says that $Bx\in \rg(C^T)$ and $C^Tz\in \rg(B)$. Consequently, the vectors $Bx$ and $C^Tz$ belong to $ \rg(B) \cap \rg(C^T)$,
	which is a trivial subspace by the assumption in $(3)$. This implies that $Bx$ and $C^Tz$ are both zero, i.e., $x\in \k(B)$ and $z\in \k(C^T)$.
	As a result, from \eqref{eq1.4} it follows that
	$x\in \k(A) \cap \k(B)$ and $z\in \k(C^T) \cap \k(E)$, which completes the proof of \ro{$(3)$}.
\end{itemize}
\end{proof}

\begin{corollary}\label{cor2.1}
	Suppose that $D, E \succ 0$, $A=0$, and $m \ge n$. Then the matrix $\mathcal{K}$ is invertible \ro{if and only if} $\ra B=n$.
\end{corollary}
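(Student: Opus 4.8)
The plan is to obtain both implications directly from the results already available in this section, since the hypotheses $A=0$ and $D\succ 0$ collapse the relevant kernel conditions. For necessity I would invoke Proposition~\ref{prop1.1}; for sufficiency, statement~(2) of Theorem~\ref{thm1}.

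For the \emph{only if} direction, suppose $\mathcal{K}$ is invertible. Proposition~\ref{prop1.1}$(i)$ gives $\k(A)\cap\k(B)=\{0\}$. Since $A=0$ we have $\k(A)=\mathbb{R}^{n}$, so this forces $\k(B)=\{0\}$; equivalently $B$ has trivial kernel, hence full column rank, i.e.\ $\ra B=n$. (This is consistent with, and indeed requires, $m\ge n$.)

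For the \emph{if} direction, assume $\ra B=n$. I would verify that the hypotheses of Theorem~\ref{thm1}$(2)$ hold. First, $A=0\succeq 0$, $D\succ 0$ and $E\succ 0$, so in particular $A,D,E\succeq 0$. Next, since $D\succ 0$ we have $\k(D)=\{0\}$, hence $\mathbb{K}=\k(B^{T})\cap\k(D)\cap\k(C)=\{0\}$, so condition~\eqref{ker3} is satisfied. Finally, $E\succ 0$ by assumption, and because $\k(A)=\mathbb{R}^{n}$ we get $\k(A)\cap\k(B)=\k(B)=\{0\}$, the last equality holding because $B$ has full column rank. Theorem~\ref{thm1}$(2)$ then yields that $\mathcal{K}$ is invertible.

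I do not anticipate a genuine obstacle here: the statement is essentially a packaging of Proposition~\ref{prop1.1} and Theorem~\ref{thm1} specialized to $A=0$ and $D,E\succ 0$. The only point worth stating carefully is the role of the hypothesis $m\ge n$, which guarantees that the condition $\ra B=n$ (full column rank) is attainable; without it that condition could never hold and $\mathcal{K}$ would necessarily be singular. If a self-contained argument were preferred, one could instead start from $\mathcal{K}\bar u=0$ with $\bar u=[x;y;z]$, use $D,E\succ 0$ together with the energy identity \eqref{eq1.4} to force $y=0$ and $z=0$, and then deduce $Bx=0$; but this merely reproduces the specialization of the Theorem~\ref{thm1} argument.
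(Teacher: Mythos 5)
Your proposal is correct and essentially matches the paper's argument: the paper proves sufficiency by re-running the kernel/energy identity \eqref{eq1.4} (forcing $y=z=0$ and then $Bx=0$) and proves necessity by exhibiting the null vector $[x;0;0]$ for $x\in\k(B)$, which is exactly the content of Theorem~\ref{thm1}(2) and Proposition~\ref{prop1.1}(i) that you invoke. Citing those results instead of specializing their proofs is only a packaging difference, and your verification of the hypotheses (in particular $\k(A)\cap\k(B)=\k(B)=\{0\}$ when $A=0$ and $\ra B=n$) is complete.
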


\begin{proof}
Let $\ra B=n$ and $\mathcal{K}\bar u=0$ where $\bar u=[x;y;z]$.	In view of Eq.\  \eqref{eq1.4}, one can verify that $y$ and $z$ are both zero when $D, E \succ 0$. Therefore, since $A=0$, Eqs.\  \eqref{hom} reduce to $Bx=0$, which implies that $x=0$ and completes the proof of the nonsingularity of $\mathcal{K}$.

Conversely, assume that $\mathcal{K}$ is invertible. If $\ra B<n$, then there exists $x\ne 0$ such that $Bx=0$. Consequently, we have 
$\mathcal{K}\tilde u=0$ for $\tilde u=[x;0;0]$ which contradicts the assumed nonsingularity of $\mathcal{K}$. Therefore, we conclude $\ra B=n$.
\end{proof}
The following two additional corollaries can be readily proven in a similar fashion to Corollary \ref{cor2.1}; their proofs are omitted.

\begin{corollary}
	Suppose that $A,D\succ 0$ and $E=0$, and suppose further that $m\ge p$. The matrix $\mathcal{K}$ is invertible \ro{if and only if}  $\ra C=p$.
\end{corollary}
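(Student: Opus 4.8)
The plan is to follow the template of Corollary~\ref{cor2.1}, proving the two implications separately and reusing the energy identity \eqref{eq1.4}, which was obtained in the proof of Theorem~\ref{thm1} without any nonsingularity hypothesis on $A$, $D$, or $E$ and therefore remains available here. For the sufficiency direction, suppose $\ra C = p$ and let $\bar u = [x;y;z]$ satisfy $\mathcal{K}\bar u = 0$, so that \eqref{hom} holds. With $E = 0$, identity \eqref{eq1.4} reduces to $x^{T}Ax + y^{T}Dy = 0$, and since $A \succ 0$ and $D \succ 0$ this forces $x = 0$ and $y = 0$. Substituting back into \eqref{hom2} leaves $C^{T}z = 0$, i.e.\ $z \in \k(C^{T})$; but $C \in \mathbb{R}^{p \times m}$ with $m \ge p$, so $\ra C = p$ means $C$ has full row rank, whence $\k(C^{T}) = \{0\}$ and $z = 0$. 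Thus $\k(\mathcal{K}) = \{0\}$ and $\mathcal{K}$ is invertible.

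For the necessity direction I would argue exactly as in the converse part of Corollary~\ref{cor2.1}: if $\ra C < p$, then $C^{T}$ does not have trivial kernel, so there is a nonzero $z$ with $C^{T}z = 0$, and then $\tilde u = [0;0;z]$ satisfies $\mathcal{K}\tilde u = 0$ (the third block equation holds because $E = 0$), contradicting the invertibility of $\mathcal{K}$; hence $\ra C = p$. Alternatively, this direction is already contained in Proposition~\ref{prop1.1}$(iii)$: since $E = 0$ we have $\k(E) = \mathbb{R}^{p}$, so the necessary condition $\k(C^{T}) \cap \k(E) = \{0\}$ collapses to $\k(C^{T}) = \{0\}$, which is equivalent to $\ra C = p$.

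I do not expect a genuine obstacle here: the argument is routine once \eqref{eq1.4} is invoked, and the only points deserving care are the bookkeeping in specializing \eqref{eq1.4} (namely $y^{T}Bx = -x^{T}Ax$ via \eqref{hom1} and $y^{T}C^{T}z = (Cy)^{T}z = 0$ via \eqref{hom3} together with $E = 0$) and the role of the hypothesis $m \ge p$. The latter is what keeps the characterization non-vacuous: since $C \in \mathbb{R}^{p\times m}$, the equality $\ra C = p$ can hold only if $m \ge p$, and when $m < p$ one always has $\ra C \le m < p$, so $\mathcal{K}$ is never invertible, which is consistent with the necessity part.
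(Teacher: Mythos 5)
Your proof is correct and follows exactly the route the paper intends: the paper omits this proof, stating it is ``readily proven in a similar fashion to Corollary~\ref{cor2.1}'', which is precisely your argument---use the identity \eqref{eq1.4} with $A,D\succ 0$ and $E=0$ to force $x=y=0$, then $\ra C=p$ gives $z=0$, and for necessity exhibit $[0;0;z]$ in the kernel when $\ra C<p$ (equivalently, invoke Proposition~\ref{prop1.1}$(iii)$ with $\k(E)=\mathbb{R}^p$). Your remark on the role of $m\ge p$ is also accurate.
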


\begin{corollary}
	Suppose that $A,E\succ 0$ and $D=0$. The matrix $\mathcal{K}$ is invertible \ro{if and only if}  $\k(B^T) \cap \k(C) =\{0\}$.
\end{corollary}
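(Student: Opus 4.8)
The plan is to obtain both implications from machinery already in place, essentially mirroring the proof of Corollary~\ref{cor2.1}. For the ``if'' direction, the key observation is that $D=0$ makes $\k(D)=\mathbb{R}^m$, so the triple intersection $\mathbb{K}=\k(B^T)\cap\k(D)\cap\k(C)$ appearing in \eqref{ker3} collapses to $\k(B^T)\cap\k(C)$. Hence the hypothesis $\k(B^T)\cap\k(C)=\{0\}$ is precisely condition \eqref{ker3}. Since $A\succ0\succeq0$, $D=0\succeq0$, $E\succ0\succeq0$, and since $E\succ0$ forces $\k(E)=\{0\}$ so that $\k(C^T)\cap\k(E)=\{0\}$ holds automatically, Theorem~\ref{thm1}(1) applies verbatim and gives the invertibility of $\mathcal{K}$. (One could equally invoke Theorem~\ref{thm1}(2), using $E\succ0$ together with $\k(A)\cap\k(B)=\{0\}$, the latter being immediate from $A\succ0$.)

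For the converse I would argue by contraposition, exactly as in the second half of the proof of Corollary~\ref{cor2.1}. If $\k(B^T)\cap\k(C)\neq\{0\}$, choose $y\neq 0$ in this intersection and set $\tilde u=[0;y;0]$. A one-line verification using $B^Ty=0$, $D=0$, and $Cy=0$ shows that $\mathcal{K}\tilde u=0$, contradicting the assumed nonsingularity of $\mathcal{K}$. Therefore $\k(B^T)\cap\k(C)=\{0\}$.

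There is essentially no obstacle here: the forward direction is a direct specialization of Theorem~\ref{thm1}, and the backward direction is a straightforward null-vector construction. The only point worth flagging is that the assumption $D=0$ plays a double role --- it trivializes $\k(D)$ in \eqref{ker3} and it annihilates the middle block-row when acting on the test vector $[0;y;0]$ --- and it is precisely this that upgrades $\k(B^T)\cap\k(C)=\{0\}$ from a merely sufficient condition (as in Proposition~\ref{prop1.1}) to a characterization of invertibility.
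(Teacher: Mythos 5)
Your proof is correct and takes essentially the same route the paper intends: the paper omits the argument, noting it goes as in Corollary~\ref{cor2.1}, namely sufficiency via the energy identity underlying Theorem~\ref{thm1} (which you invoke directly through its statement (1), observing that $D=0$ collapses \eqref{ker3} to $\k(B^T)\cap\k(C)=\{0\}$) and necessity via the null vector $[0;y;0]$. One small slip in your closing remark only: in Proposition~\ref{prop1.1} the kernel condition (ii), which reduces to $\k(B^T)\cap\k(C)=\{0\}$ when $D=0$, is listed there as a \emph{necessary} condition, not a sufficient one, so the corollary upgrades it to a sufficient (hence characterizing) condition rather than the reverse.
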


We now further establish necessary and sufficient conditions on nonsingularity of $\mathcal{K}$  under different assumptions, using summation of subspaces. Recall that for given subspaces $\ro{\mathbb{S}}_1$ and $\ro{\mathbb{S}}_2$ of the vector space  over real numbers, the sum of $\ro{\mathbb{S}}_1$ and $\ro{\mathbb{S}}_2$ is the subspace
	\[
	\ro{\mathbb{S}}_1+\ro{\mathbb{S}}_2={\rm Span}\{\ro{\mathbb{S}}_1 \cup \ro{\mathbb{S}}_2\}=\{x+y~|~x\in \ro{\mathbb{S}}_1,~ y\in \ro{\mathbb{S}}_2\}.
	\]
	If $\ro{\mathbb{S}}_1 \cap \ro{\mathbb{S}}_2$ is trivial, the sum of $\ro{\mathbb{S}}_1$ and $\ro{\mathbb{S}}_2$ is called a direct sum and it is written as
	$\ro{\mathbb{S}}_1 \oplus \ro{\mathbb{S}}_2$; every $z \in \ro{\mathbb{S}}_1 \oplus \ro{\mathbb{S}}_2$ can be written as $z = x + y$ with $x \in \ro{\mathbb{S}}_1$ and $y \in \ro{\mathbb{S}}_2 $ in a unique way \cite[Subsection 0.1.3]{Horn1}. \ro{The assumptions below} are motivated by the discussion in \cite{EG2015} for the nonsingular saddle-point system
\[\underbrace {\left[ {\begin{array}{*{20}{c}}
			A&{{B^T}}\\
			B&0
	\end{array}} \right]}_{\fb{\hat{\bar{\mathcal{K}}}}}\left[ {\begin{array}{*{20}{c}}
		u\\
		p
\end{array}} \right] = \left[ {\begin{array}{*{20}{c}}
		f\\
		g
\end{array}} \right], \]
in which $A\in \mathbb{R}^{n\times n},$ $B\in \mathbb{R}^{m\times n}$, and the matrix $A$ is assumed to be a {\it maximally rank deficient} symmetric positive semidefinite matrix, i.e., 
$$ \ra A = n-m,$$
or equivalently, $\n(A)=m$.

Bearing in mind that $\k(A) \cap \k(B) =\{0\}$ is a necessary condition for invertibility of $\fb{\hat{\bar{\mathcal{K}}}}$, then, if in addition $\ra B=m$, we have
\begin{equation}\label{direct}
	\k(A) \oplus \k(B) = \mathbb{R}^{n}. 
\end{equation}

\begin{theorem}\label{thm1.2}
Let $\mathcal{K}$ be given by \eqref{eq:coef}. Suppose that $A\succeq 0$, $D\succeq 0$ and $E\succeq 0$ such that $\k(A) \cap \k(B)=\{0\}$, $\k(C^T) \cap \k(E) =\{0\}$, and condition \eqref{ker3} \rt{holds}. 
If 
\begin{equation}\label{rang}
\rg(B) \cap \rg(C^T)=\{0\},
\end{equation}
then $\mathcal{K}$ is invertible. Furthermore, condition~\eqref{rang} is necessary 
for the invertibility of $\mathcal{K}$ when \eqref{direct} \rt{is} satisfied and
		\begin{equation}\label{direct2}
	\k(E) \oplus \k(C^T) = \mathbb{R}^{p}.
\end{equation} 
\end{theorem}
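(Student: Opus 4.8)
The sufficiency half is already in hand. Under the standing hypotheses $A,D,E\succeq 0$ together with \eqref{ker3}, the three conditions $\rg(B)\cap\rg(C^T)=\{0\}$, $\k(C^T)\cap\k(E)=\{0\}$ and $\k(A)\cap\k(B)=\{0\}$ are exactly the hypotheses of Theorem~\ref{thm1}(3), so that theorem immediately yields the invertibility of $\mathcal{K}$; nothing new is required here.

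For the necessity half, the plan is to argue by contradiction: assume $\mathcal{K}$ is invertible and that \eqref{direct} and \eqref{direct2} hold, but that \eqref{rang} fails, and then exhibit a nonzero vector in $\k(\mathcal{K})$. Pick $0\neq w\in\rg(B)\cap\rg(C^T)$ and write $w=Bx_0$ and $w=-C^Tz_0$ for suitable $x_0\in\mathbb{R}^n$ and $z_0\in\mathbb{R}^p$ (the minus sign is merely a convenience). The key idea is to seek a kernel vector of the special form $\bar u=[x;0;z]$, i.e. with vanishing middle block: for such a vector the system \eqref{hom} collapses to $Ax=0$, $Bx+C^Tz=0$, $Ez=0$, so it suffices to produce $x\in\k(A)$, $z\in\k(E)$ with $Bx=-C^Tz$.

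To build such $x,z$, I would invoke the two direct-sum splittings. By \eqref{direct}, write $x_0=x_A+x_B$ with $x_A\in\k(A)$ and $x_B\in\k(B)$; by \eqref{direct2}, write $z_0=z_E+z_C$ with $z_E\in\k(E)$ and $z_C\in\k(C^T)$. Then $Bx_A=Bx_0=w$ and $C^Tz_E=C^Tz_0=-w$, hence $Bx_A+C^Tz_E=0$, while by construction $x_A\in\k(A)$ and $z_E\in\k(E)$. Thus $\bar u:=[x_A;0;z_E]$ satisfies $\mathcal{K}\bar u=0$. Finally $\bar u\neq 0$, because $Bx_A=w\neq 0$ forces $x_A\neq 0$. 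This contradicts the invertibility of $\mathcal{K}$, so \eqref{rang} must hold.

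I do not anticipate a serious obstacle. The only genuinely load-bearing observation is that a kernel vector should be sought with zero middle component, which reduces the coupled middle equation $Bx-Dy+C^Tz=0$ to the single relation $Bx+C^Tz=0$ governed precisely by $\rg(B)\cap\rg(C^T)$; after that it is bookkeeping with the two decompositions and the sign conventions. Note that the hypotheses \eqref{direct} and \eqref{direct2} are each used exactly once, and the semidefiniteness of $A,D,E$ plays no role in this direction.
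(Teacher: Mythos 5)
Your proof is correct and takes essentially the same route as the paper: sufficiency is an immediate appeal to Theorem~\ref{thm1}(3), and necessity is proved by decomposing the preimages of $w$ via \eqref{direct} and \eqref{direct2} and constructing the kernel vector $[x_A;0;z_E]$, which is exactly the paper's vector $[x_1;0;-z_1]$ up to your choice of sign convention for $w$.
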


\begin{proof}
	Let \eqref{rang} hold. We can conclude the nonsingularity from 
the	\fb{third statement}  in Theorem \ref{thm1}.

To prove the second assertion stated in the theorem, let $\mathcal{K}$ be invertible. Assume that, in contradiction to \eqref{rang}, there exists a nonzero vector $w\in \rg(B) \cap \rg(C^T)$. As a result
\begin{equation}\label{eq1.10}
	w=Bx \quad \text{and} \quad w=C^Tz
\end{equation}
for some $x\in \mathbb{R}^{n}$ and $z\in \mathbb{R}^{p}$. By the assumptions \eqref{direct} and \eqref{direct2}, we deduce that the vectors $x$ and $z$ can be uniquely written in the \ro{form} $x=x_1+x_2$ and $z=z_1+z_2$ such that $x_1\in \k(A), x_2\in \k(B), z_1\in \k(E)$, and $z_2\in \k(C^T)$. From \eqref{eq1.10} it follows that
$w=Bx_1$ and $w=C^Tz_1$.
Considering these two last relations, we conclude that $\mathcal{K} \tilde{u}=0$ for the nonzero vector $\tilde{u}=[x_1;0;-z_1]$, which is a contradiction to the assumed nonsingularity of $\mathcal{K}$. Hence, the subspace $\rg(B) \cap \rg(C^T)$  is trivial, as required.
\end{proof}

The following two theorems reveal that the symmetric positive semidefiniteness requirement of two of the block diagonal matrices in Theorem \ref{thm1.2} can be relaxed  under certain assumptions on ranks and dimensions of $B$ and $C$.

\begin{theorem}\label{thm1.2n}
Let $\mathcal{K}$ be given by \eqref{eq:coef} such that $\k(C^T) \cap \k(E) =\{0\}$. Suppose further that \ro{$n\ge m$}, $\ra B=m$, and \eqref{direct} \rt{holds}. Assume that 
	$A\succeq 0$ and condition \eqref{rang} holds, then $\mathcal{K}$ is invertible.
	When $E$ is zero, condition \eqref{rang}
	is necessary for the invertibility of $\mathcal{K}$.
\end{theorem}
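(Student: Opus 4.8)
The plan is to prove the two assertions separately, in both cases working directly with the kernel of $\mathcal{K}$, as in the proof of Theorem~\ref{thm1}. Two consequences of the hypotheses should be recorded first: since $\ra B = m$ (which already forces $n\ge m$), the matrix $B^{T}$ has full column rank, hence $\k(B^{T})=\{0\}$ and condition \eqref{ker3} is automatically in force; moreover the direct-sum hypothesis \eqref{direct} contains the statement $\k(A)\cap \k(B)=\{0\}$. Thus all the hypotheses of statement~$(3)$ of Theorem~\ref{thm1} are met \emph{except} the semidefiniteness of $D$ and $E$, and the work lies in recovering the conclusion without them.

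For sufficiency, I would start from $\mathcal{K}\bar u=0$ with $\bar u=[x;y;z]$, so that \eqref{hom1}--\eqref{hom3} hold, and decompose $x=x_{1}+x_{2}$ with $x_{1}\in\k(A)$ and $x_{2}\in\k(B)$ via \eqref{direct}. Left-multiplying \eqref{hom1} by $x_{2}^{T}$ and using $Bx_{2}=0$ together with $Ax_{1}=0$ isolates $x_{2}^{T}Ax_{2}=0$; since $A\succeq 0$ this gives $x_{2}\in\k(A)\cap\k(B)=\{0\}$, so $x=x_{1}\in\k(A)$ and $Ax=0$. Substituting back into \eqref{hom1} yields $B^{T}y=0$, and injectivity of $B^{T}$ forces $y=0$. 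Then \eqref{hom2} collapses to $Bx=-C^{T}z\in\rg(B)\cap\rg(C^{T})=\{0\}$ by \eqref{rang}, so $x\in\k(A)\cap\k(B)=\{0\}$ and $z\in\k(C^{T})$; finally \eqref{hom3} with $y=0$ gives $Ez=0$, so $z\in\k(C^{T})\cap\k(E)=\{0\}$. Hence $\bar u=0$ and $\mathcal{K}$ is invertible.

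For the necessity of \eqref{rang} when $E=0$, I would argue by contradiction as in the second half of the proof of Theorem~\ref{thm1.2}: suppose $\mathcal{K}$ is invertible yet there exists $w\ne 0$ with $w\in\rg(B)\cap\rg(C^{T})$. From \eqref{direct} one gets $\rg(B)=B(\k(A))$, so there is $x_{1}\in\k(A)$ with $Bx_{1}=w$, and there is $z$ with $C^{T}z=w$; then $\tilde u=[x_{1};0;-z]$ satisfies $\mathcal{K}\tilde u=0$ (using $Ax_{1}=0$, $Bx_{1}-C^{T}z=w-w=0$, and $E=0$), while $\tilde u\ne 0$ because $w\ne 0$ forces $x_{1}\ne 0$ --- contradicting invertibility. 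This is simpler than the corresponding step in Theorem~\ref{thm1.2} because $E=0$ removes any need for the second decomposition \eqref{direct2}.

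The step I expect to be the crux is the elimination of $y$: without sign-definiteness of $D$ and $E$ the energy identity \eqref{eq1.4} carries no information, so one cannot argue $y\in\k(D)$ as before. The substitute is to read $x_{2}^{T}Ax_{2}=0$ off \eqref{hom1} directly, using the $\k(B)$-component of $x$; this pins $x$ into $\k(A)$, whereupon full column rank of $B^{T}$ kills $y$ outright and the remaining two equations reduce exactly to the configuration handled in statement~$(3)$ of Theorem~\ref{thm1}. The only other point worth stating explicitly is the identity $\rg(B)=B(\k(A))$ used in the necessity part, which is an immediate consequence of \eqref{direct}.
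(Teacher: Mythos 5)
Your proof is correct and follows the same kernel-based skeleton as the paper's: decompose $x=x_1+x_2$ via \eqref{direct}, eliminate the $\k(B)$-component, deduce $y=0$ from the full column rank of $B^T$, and then treat the reduced system $Bx+C^Tz=0$, $Ez=0$ using \eqref{rang} and $\k(C^T)\cap\k(E)=\{0\}$; your necessity argument for $E=0$ is essentially identical to the paper's (the paper takes $\bar u=[-x_1;0;z]$ and concludes $w=0$ directly rather than by explicit contradiction, but the content is the same, including the observation that \eqref{direct} lets one replace $x$ by its $\k(A)$-component). The one genuine difference is how $x_2$ is eliminated: the paper writes $x_2=Z\tilde{x}_2$ for a basis $Z$ of $\k(B)$, multiplies \eqref{hom1} by $Z^T$ and invokes the nonsingularity of the reduced Hessian $Z^TAZ$ (a cited result requiring $A\succeq 0$ and $\k(A)\cap\k(B)=\{0\}$), which then forces a separate treatment of the degenerate case $n=m$, where $Z$ is empty and $A$ must be zero; you instead read $x_2^TAx_2=0$ directly off \eqref{hom1} (using $Bx_2=0$ and $Ax_1=0$) and use $A\succeq 0$ to conclude $Ax_2=0$, hence $x_2\in\k(A)\cap\k(B)=\{0\}$. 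Your variant is self-contained --- it reproves exactly the piece of the $Z^TAZ$ fact that is needed --- and handles all $n\ge m$ uniformly with no case split; the paper's route, on the other hand, makes explicit the link to the null-space/reduced-Hessian machinery that it reuses in Section \ref{sec3}. Both arguments use every hypothesis of Theorem \ref{thm1.2n}, and your preliminary remarks (that $\ra B=m$ gives $\k(B^T)=\{0\}$, hence \eqref{ker3}, and that \eqref{direct} contains $\k(A)\cap\k(B)=\{0\}$) are accurate, though the first is only context and is not needed in the proof itself.
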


\begin{proof}
	\ro{We first consider the case where $n>m$.} Let \eqref{rang} be satisfied and suppose \rt{that}  $\mathcal{K}\bar u=0$ where $\bar u=[x;y;z]$. Hence, the relations \eqref{hom} are satisfied.
	Let $Z$ be an $n\times (n-m)$ matrix whose columns form a basis for $\k(B)$. \ro{It is known that $Z^TAZ$ is nonsingular, see \cite{G1985}}. In view of \eqref{direct},
	the vector $x$ can be written as $x=x_1+x_2$ where $x_1\in \k(A)$ and $x_2\in \k(B)$. We can write $x_2=Z \tilde{x}_2$ for some $\tilde{x}_2\in \mathbb{R}^{(n-m)}$.	
	Using  Eq.\  \eqref{hom1}, one observes that
\begin{equation}\label{eq18}
		AZ\tilde{x}_2+B^Ty=0.
\end{equation}
Multiplying \eqref{eq18} by $Z^T$ from the left, by nonsingularity of $Z^TAZ$ \ro{and $Z^TB^T=(BZ)^T=0$}, we conclude that $\tilde{x}_2$ is zero, which implies $x_2$ is a zero vector. 
Consequently,  \eqref{eq18} reduces to 
\[
B^Ty=0,
\]
and this, together with $\ra{B}=m$, shows that $y$ is zero. As a result,  Eqs.\  \eqref{hom2} and \eqref{hom3} take the form
\begin{subequations}
\begin{eqnarray}
	Bx_1+C^Tz & =& 0 \label{eq19a} \\
	Ez & =& 0\label{eq19b}.
\end{eqnarray}
\end{subequations}
Since the intersection of $\rg(B)$ and $\rg(C^T)$ is trivial,  \eqref{eq19a} implies that $x_1\in \k(B)$ and $z\in \k(C^T)$. Notice that $z\in \k(E)$ by \eqref{eq19b}, which yields $z\in \k(C^T) \cap \k(E)$. This, together with the fact that $x_1\in \k(A)$, implies that  $z$ and $x$ are zero. It has been already shown that $y$ is zero. Consequently, we \rt{deduce} the nonsingularity of $\mathcal{K}$.

\ro{When $n=m$, by the assumptions 
	$\ra B=n$ and \eqref{direct}, we conclude that $\n{(B)}=0$ and $\n{(A)}=n$. This case happens when $A$ is zero. Consequently, if  $\mathcal{K}\bar u=0$ for $\bar u=[x;y;z]$ then we can immediately observe that $y$ is zero. Similar to the reasoning given above, we can further verify $x$ and $z$ are zero vectors which ensures the nonsingularity of $\mathcal{K}$.
} 

Now, suppose that $E$ is zero and $\mathcal{K}^{-1}$ exists. Let $w\in \rg(B) \cap \rg(C^T)$. Therefore, $w=Bx$  and $w=C^Tz$ for some $x\in \mathbb{R}^{n}$ and $z\in \mathbb{R}^{p}$. By \eqref{direct}, we have $x=x_1+x_2$ such that $x_1\in \k(A)$ and $x_2\in \k(B)$, which implies that $w=Bx_1$. Now it can be seen that $\ro{\mathcal{K}}\bar{u}=0$ for $\bar{u}=[-x_1;0;z]$. The nonsingularity of $\mathcal{K}$ implies that $\bar{u}$ is zero. Hence, the vector $w$ is zero.
\end{proof}

The proof of the following theorem follows \ro{from applying Theorem \ref{thm1.2n}
to the matrix
\begin{equation}\label{sim}
\mathcal{K}_s :=	\left[ {\begin{array}{*{20}{c}}
			E&C&0\\
			{{C^T}}&{ - D}&B\\
			0&{{B^T}}&A
	\end{array}} \right],
\end{equation}
which is similar to $\mathcal{K}$, i.e., $\mathcal{K}_s=\mathcal{P}\mathcal{K}\mathcal{P}$ where $\mathcal{P}$ is the symmetric permutation matrix given as follows:
\[\mathcal{P}= \left[ {\begin{array}{*{20}{c}}
		0&0&I\\
		0&I&0\\
		I&0&0
\end{array}} \right].\]
}

\begin{theorem}\label{thm1.3n}
Let $\mathcal{K}$ be given by \eqref{eq:coef} such that $\k(A) \cap \k(B) =\{0\}$. Furthermore, assume that \ro{$p\ge m$} and $\ra C=m$, condition \eqref{direct2} is satisfied and
$E\succeq 0$.
	If condition \eqref{rang} holds, 
	then   $\mathcal{K}$ is invertible.  When  $A$ is zero, condition \eqref{rang} is necessary for the invertibility of $\mathcal{K}$.
\end{theorem}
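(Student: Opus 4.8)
The strategy is to obtain this result as a corollary of Theorem~\ref{thm1.2n} via the permutation similarity $\mathcal{K}_s=\mathcal{P}\mathcal{K}\mathcal{P}$ recorded before the statement. Since $\mathcal{P}$ is a symmetric permutation matrix with $\mathcal{P}^2=I$, we have $\mathcal{P}^{-1}=\mathcal{P}$, so $\mathcal{K}$ and $\mathcal{K}_s$ are invertible, or singular, simultaneously. Hence it suffices to verify that the hypotheses of the present theorem, read through this similarity, are precisely the hypotheses of Theorem~\ref{thm1.2n} applied to $\mathcal{K}_s$.

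First I would identify $\mathcal{K}_s$ as an instance of the template~\eqref{eq:coef}. Comparing the two block forms entry by entry, the leading block of $\mathcal{K}_s$ is $E$ (size $p\times p$), the $(1,2)$ block is $C$ so the role of the off-diagonal factor ``$B$'' is played by $C^T$ (size $m\times p$), the middle block is $-D$, the $(2,3)$ block is $B$ so the role of ``$C$'' is played by $B^T$ (size $n\times m$), and the trailing block is $A$ (size $n\times n$). Thus applying Theorem~\ref{thm1.2n} to $\mathcal{K}_s$ corresponds to the dictionary $(A,B,C,D,E,n,m,p)\mapsto(E,\,C^T,\,B^T,\,D,\,A,\,p,\,m,\,n)$.

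Next I would translate each hypothesis of Theorem~\ref{thm1.2n} through this dictionary and match it against the assumptions in force here. The condition $\k(C^T)\cap\k(E)=\{0\}$ becomes $\k(B)\cap\k(A)=\{0\}$ (using $\k((B^T)^T)=\k(B)$), which is assumed; the dimension condition $n\ge m$ becomes $p\ge m$; the rank condition $\ra B=m$ becomes $\ra{C^T}=\ra{C}=m$; condition~\eqref{direct} becomes $\k(E)\oplus\k(C^T)=\mathbb{R}^{p}$, i.e.\ \eqref{direct2}; the semidefiniteness $A\succeq 0$ becomes $E\succeq 0$; and \eqref{rang} becomes $\rg(C^T)\cap\rg(B)=\{0\}$, which is \eqref{rang} itself since that subspace is symmetric in $B$ and $C^T$. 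Consequently Theorem~\ref{thm1.2n} yields the invertibility of $\mathcal{K}_s$, and hence of $\mathcal{K}$. For the necessity statement, observe that ``$A$ is zero'' corresponds under the dictionary to the trailing block of the template form of $\mathcal{K}_s$ being zero; the corresponding clause of Theorem~\ref{thm1.2n} then asserts that $\rg(C^T)\cap\rg(B)=\{0\}$, i.e.\ \eqref{rang}, is necessary for the invertibility of $\mathcal{K}_s$, hence of $\mathcal{K}$.

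There is no genuine analytic difficulty in this argument; the only point that needs care is the bookkeeping of the dictionary, making sure the transposes and permuted dimensions are tracked so that the necessary condition $\k(A)\cap\k(B)=\{0\}$ and the range condition~\eqref{rang} land on the correct hypotheses of Theorem~\ref{thm1.2n}, and that the ``$E=0$'' clause there translates to the ``$A=0$'' clause here. Once the correspondence table is written down, the proof reduces to a single invocation of Theorem~\ref{thm1.2n}.
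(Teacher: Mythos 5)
Your proposal is correct and is essentially the paper's own argument: the paper proves Theorem~\ref{thm1.3n} precisely by applying Theorem~\ref{thm1.2n} to the permuted matrix $\mathcal{K}_s=\mathcal{P}\mathcal{K}\mathcal{P}$, exactly as you do. Your explicit dictionary $(A,B,C,D,E,n,m,p)\mapsto(E,C^T,B^T,D,A,p,m,n)$ just spells out the bookkeeping the paper leaves implicit, and it is carried out correctly.
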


\section{Invertibility when the  $(1,1)$-block is maximally rank deficient}\label{sec3}

In this section we \fb{mainly focus on obtaining the necessary and sufficient conditions for the existence of $\mathcal{K}^{-1}$} defined in \eqref{eq:coef}  when $\n(A)=m$. This case is particularly interesting in applications related to electromagnetics, such as time-harmonic Maxwell's equations and incompressible magnetohydrodynamics problems. In those cases the leading block is a discrete curl-curl operator, which is known to have a large kernel of gradient functions.

\ro{As previously mentioned, the matrix $\mathcal{K}$ is similar to $\mathcal{K}_s$ given in \eqref{sim}. Consequently, the following established results can be stated for $\mathcal{K}_s$ which results in a distinct set of assumptions on the blocks. This entails swapping the roles of $A$, $B$, and $C$ with $E$, $C^T$, and $B^T$, respectively. 
}

\subsection{On the nullity of the $(3,3)$ diagonal block}

We establish a connection between the invertibility of the matrix $E$, which is the $(3,3)$ diagonal block of $\mathcal{K}$, and the invertibility of $\mathcal{K}$. Some additional connections between the nullity of $E$ and the nullity of other blocks of $\mathcal{K}$ or its inverse are then provided.
Recall the following useful theorem. 
\begin{theorem}\label{th3.5EG}\cite[Theorem 3.5]{EG2015}
	Suppose that $\n(A)=m$, $\k(A) \cap \k(B) = \{0\}$ and let $W\in \mathbb{R}^{m\times m}$ be an invertible matrix. Then,
	\[B(A+B^TW^{-1}\rt{B})^{-1}B^T=W.\]
\end{theorem}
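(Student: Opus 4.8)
The plan is to prove the stated identity (and, along the way, the implicit claim that $A+B^TW^{-1}B$ is invertible) by reducing everything to a single explicit linear solve. First I would fix a matrix $Z\in\mathbb{R}^{n\times m}$ whose columns form a basis of $\k(A)$, so that $AZ=0$ and, $A$ being symmetric, $Z^TA=(AZ)^T=0$ as well. The crucial preliminary observation is that the $m\times m$ matrix $BZ$ is nonsingular: if $BZc=0$, then $Zc\in\k(A)\cap\k(B)=\{0\}$, whence $c=0$ because $Z$ has full column rank. As a by-product this already shows $\ra B=m$, so full row rank of $B$ need not be assumed separately.

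Next I would prove that $M:=A+B^TW^{-1}B$ is invertible. The familiar trick of multiplying $Mv=0$ by $v^T$ is unavailable here, because $W^{-1}$ is only assumed nonsingular, not definite, so $M$ is in general indefinite. Instead, from $Mv=0$ I would multiply on the left by $Z^T$; using $Z^TA=0$ this collapses to $(BZ)^TW^{-1}(Bv)=0$, and the nonsingularity of $BZ$ and of $W$ forces $Bv=0$. Substituting $Bv=0$ back into $Mv=0$ gives $Av=0$, so $v\in\k(A)\cap\k(B)=\{0\}$ and $v=0$; hence $M$ is nonsingular.

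Finally I would exhibit the solution of $Mx=B^T\xi$ for arbitrary $\xi\in\mathbb{R}^m$, namely $x:=Z(BZ)^{-1}W\xi$. Indeed $Mx=AZ(BZ)^{-1}W\xi+B^TW^{-1}BZ(BZ)^{-1}W\xi=0+B^TW^{-1}W\xi=B^T\xi$, using $AZ=0$ and $BZ(BZ)^{-1}=I$. Since $M$ is invertible, $x=M^{-1}B^T\xi$, and therefore $BM^{-1}B^T\xi=Bx=BZ(BZ)^{-1}W\xi=W\xi$. As $\xi$ is arbitrary, this yields $B(A+B^TW^{-1}B)^{-1}B^T=W$.

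The only genuinely delicate step is the nonsingularity of $M$: with no sign condition on $W$ one cannot argue via a quadratic form, so the argument must instead exploit the orthogonality $Z^TA=0$ together with the invertibility of $BZ$ — both of which are consequences of $\n(A)=m$ combined with $\k(A)\cap\k(B)=\{0\}$. Everything else is a short computation; the ansatz $x=Z(BZ)^{-1}W\xi$ is found simply by seeking a solution of $Mx=B^T\xi$ inside $\k(A)$, on which $M$ acts as $B^TW^{-1}B$.
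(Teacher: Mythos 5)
Your argument is correct, but note that there is nothing in the paper to compare it against: the paper states this result as a quotation of \cite[Theorem 3.5]{EG2015} and gives no proof, using it only as an ingredient in Proposition 3.1. Checking your steps: the nonsingularity of the $m\times m$ matrix $BZ$ follows exactly as you say from $\k(A)\cap\k(B)=\{0\}$ and the full column rank of $Z$; the nonsingularity of $M=A+B^TW^{-1}B$ follows from premultiplying $Mv=0$ by $Z^T$ and using $Z^TA=0$; and the closed-form solve $x=Z(BZ)^{-1}W\xi$ of $Mx=B^T\xi$ immediately gives $BM^{-1}B^T\xi=BZ(BZ)^{-1}W\xi=W\xi$ for all $\xi$, which is the claim. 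The one point you should make explicit is that the step $Z^TA=(AZ)^T=0$ uses the symmetry of $A$, which is not in the literal statement of the theorem but is implicit in the setting of both papers (here $A\succeq 0$ means symmetric positive semidefinite); without some such hypothesis the conclusion genuinely fails --- for instance with $n=2$, $m=1$, $A=\left[\begin{smallmatrix}0&1\\0&0\end{smallmatrix}\right]$, $B=[\,1\ \ 0\,]$, the matrix $A+B^TW^{-1}B$ is already singular. Your proof is in fact slightly more general than the context requires: it needs only symmetry of $A$ (no semidefiniteness) and an arbitrary nonsingular $W$, and it yields the invertibility of $A+B^TW^{-1}B$ as a by-product rather than assuming it, which is a tidy, self-contained route to the identity.
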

We can use the result stated in Theorem \ref{th3.5EG} to establish additional necessary and sufficient conditions  for the invertibilty of $\mathcal{K}$, as follows.
\begin{proposition}\label{prop2.2}
	Let $A\succeq 0$, $D\succeq 0$ and assume conditions $(i)$--$(iii)$ in Proposition \ref{prop1.1} hold. Assume also that $\n(A)=m$. Then, the matrix $\mathcal{K}$ is invertible if and only if the matrix 
		$\tilde{\mathcal{S}}$ defined below is invertible:\rt{
	\begin{eqnarray}
		\tilde{\mathcal{S}} &= &\left[ {\begin{array}{*{20}{c}}
				{ - \frac{1}{\alpha }{{(2I - \alpha D)}^{ - 1}}}&{{{(2I - \alpha D)}^{ - 1}}{C^T}}\\
				{C{{(2I - \alpha D)}^{ - 1}}}&{E - \alpha C{{(2I - \alpha D)}^{ - 1}}{C^T}}
		\end{array}} \right],  \label{Sc2}
	\end{eqnarray}}
where $\alpha$ is a scalar that satisfies, for a nonzero matrix $D$,
	\begin{equation}\label{alpha}
		0< \alpha < \frac{2}{\lambda_{\max}(D)}.
	\end{equation}
\end{proposition}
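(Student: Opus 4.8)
The plan is to reduce $\mathcal{K}$, by a determinant‑preserving congruence, to a block matrix whose leading $n\times n$ block is invertible, and then to identify $\tilde{\mathcal{S}}$ as the corresponding Schur complement; invertibility of $\mathcal{K}$ will then be equivalent to invertibility of $\tilde{\mathcal{S}}$. The engine of the argument is Theorem~\ref{th3.5EG}: once the leading block is arranged to have the form $\hat{A}:=A+B^{T}W^{-1}B$ for a suitable invertible symmetric $W$, that theorem supplies $B\hat{A}^{-1}B^{T}=W$, which is exactly what is needed to evaluate the Schur complement in closed form.

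To carry this out, set $W:=\tfrac{1}{\alpha}(2I-\alpha D)^{-1}$. Condition~\eqref{alpha} (together with $D\succeq 0$; and $\alpha$ any positive scalar when $D=0$) ensures $2I-\alpha D\succ 0$, so $W$ is symmetric positive definite and $W^{-1}=\alpha(2I-\alpha D)$. Applying the congruence
\[
\mathcal{T}=\begin{bmatrix} I & \alpha B^{T} & 0\\ 0 & I & 0\\ 0 & 0 & I\end{bmatrix},\qquad \det\mathcal{T}=1,
\]
a direct computation gives
\[
\mathcal{T}\mathcal{K}\mathcal{T}^{T}=\begin{bmatrix} \hat{A} & B^{T}(I-\alpha D) & \alpha B^{T}C^{T}\\ (I-\alpha D)B & -D & C^{T}\\ \alpha CB & C & E\end{bmatrix},
\]
with $\hat{A}=A+\alpha B^{T}(2I-\alpha D)B=A+B^{T}W^{-1}B$. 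Since $A\succeq 0$ and $W^{-1}\succ 0$, and since $\k(A)\cap\k(B)=\{0\}$ by condition~$(i)$ while $\n(A)=m$, the block $\hat{A}$ is positive definite, hence invertible; moreover Theorem~\ref{th3.5EG} applies and yields $B\hat{A}^{-1}B^{T}=W$.

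Because $\hat{A}$ is invertible and $\det\mathcal{T}=1$, the matrix $\mathcal{K}$ is invertible if and only if the Schur complement of $\mathcal{T}\mathcal{K}\mathcal{T}^{T}$ with respect to $\hat{A}$ is invertible, and this Schur complement is
\[
\begin{bmatrix} -D & C^{T}\\ C & E\end{bmatrix}-\begin{bmatrix} (I-\alpha D)B\\ \alpha CB\end{bmatrix}\hat{A}^{-1}\begin{bmatrix} B^{T}(I-\alpha D) & \alpha B^{T}C^{T}\end{bmatrix}.
\]
Substituting $B\hat{A}^{-1}B^{T}=W=\tfrac{1}{\alpha}(2I-\alpha D)^{-1}$, using that $D$, $I-\alpha D$ and $(2I-\alpha D)^{-1}$ mutually commute, and invoking the identities $\alpha D(2I-\alpha D)+(I-\alpha D)^{2}=I$ and $(2I-\alpha D)-(I-\alpha D)=I$, each of the four blocks collapses to precisely the corresponding block of $\tilde{\mathcal{S}}$ in~\eqref{Sc2}; hence $\mathcal{K}$ is invertible if and only if $\tilde{\mathcal{S}}$ is. The substantive step is spotting the congruence $\mathcal{T}$ and the choice $W=\tfrac{1}{\alpha}(2I-\alpha D)^{-1}$; after that, the block‑by‑block simplification of the Schur complement is mechanical, the only point requiring care being the legitimacy of applying Theorem~\ref{th3.5EG} (which needs $\n(A)=m$, $\k(A)\cap\k(B)=\{0\}$, and $W$ invertible, all of which hold). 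Conditions $(ii)$ and $(iii)$, incidentally, are not needed in this argument: under the hypotheses $\ra B=m$, so $\k(B^{T})=\{0\}$ and $(ii)$ is automatic, while $(iii)$ is retained only because it is among the necessary conditions of Proposition~\ref{prop1.1}.
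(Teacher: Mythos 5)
Your proposal is correct and follows essentially the same route as the paper: your congruence matrix $\mathcal{T}$ is exactly the transpose of the paper's $\mathcal{W}$ (so $\mathcal{T}\mathcal{K}\mathcal{T}^{T}=\mathcal{W}^{T}\mathcal{K}\mathcal{W}=\tilde{\mathcal{K}}$), and both arguments then invoke Theorem~\ref{th3.5EG} with $W=\tfrac{1}{\alpha}(2I-\alpha D)^{-1}$ to evaluate the Schur complement of the positive definite block $\tilde{A}=A+\alpha B^{T}(2I-\alpha D)B$ and simplify it to $\tilde{\mathcal{S}}$. Your closing observation that only condition $(i)$ and $\n(A)=m$ are actually used (with $(ii)$ automatic since $\ra{B}=m$) is a fair aside, but it does not change the substance of the argument.
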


\begin{proof}
	Let us first define
	\begin{equation}\label{matrixW}
		\mathcal{W} = \left[ {\begin{array}{*{20}{c}}
				I&0&0\\
				\alpha B&I&0\\
				0&0&I
		\end{array}} \right].
	\end{equation}
	Consider the matrix
\begin{equation}\label{re1}
		{\mathcal{W}^T}	\mathcal{K}\mathcal{W} = \tilde{\mathcal{K}},
\end{equation}
	where 
	\begin{equation}\label{Cmatrix}
		\tilde{\mathcal{K}}=\left[ {\begin{array}{*{20}{c}}
				{A  + \alpha {B^T}(2I - \alpha D)B}&{{{(B - \alpha DB)}^T}}&{{{\alpha(CB)}^T}}\\
				{B - \alpha DB}&{ - D}&{{C^T}}\\
				{\alpha CB}&C&E
		\end{array}} \right].
	\end{equation}
	Notice that $2I-\alpha D \succ 0$, so the block $A  + \alpha {B^T}(2I - \alpha D)B \succ 0$. Using Theorem \ref{th3.5EG} with \ro{$W=\frac{1}{\alpha}(2I - \alpha D)^{-1}$}, we can verify that
\begin{equation}\label{s5}
		\tilde{\mathcal{K}}= \left[ {\begin{array}{*{20}{c}}
			I&{0}\\
			{\mathcal{B}{\tilde{A}^{-1}}}&I
	\end{array}} \right]\left[ {\begin{array}{*{20}{c}}
			{\tilde{A}}&{0}\\
			{0}&\tilde{\mathcal{S}}
	\end{array}} \right]\left[ {\begin{array}{*{20}{c}}
			I&{\tilde{A}^{-1}\mathcal{B}^T}\\
			{0}&I
	\end{array}} \right],
\end{equation}
	where $\tilde{A}:=A  + \alpha {B^T}(2I - \alpha D)B$,  $\mathcal{B}:=[B - \alpha DB;\alpha CB]$ and 
\begin{eqnarray}
\nonumber	\tilde{\mathcal{S}} & = &	\left[ {\begin{array}{*{20}{c}}
			{ - D}&{{C^T}}\\
			C&E
	\end{array}} \right] - \left[ {\begin{array}{*{20}{c}}
			{\frac{1}{\alpha }(I - \alpha D){{(2I - \alpha D)}^{ - 1}}(I - \alpha D)}&{(I - \alpha D){{(2I - \alpha D)}^{ - 1}}{C^T}}\\
			{C{{(2I - \alpha D)}^{ - 1}}(I - \alpha D)}&{\alpha C{{(2I - \alpha D)}^{ - 1}}{C^T}}
	\end{array}} \right].
\end{eqnarray}
\rt{Denoting $M=2I-\alpha D$, one can observe that
\begin{eqnarray}
	\nonumber 	\tilde{\mathcal{S}} & = & \left[ {\begin{array}{*{20}{c}}
			{ - D}&{{C^T}}\\
			C&E
	\end{array}} \right] - \left[ {\begin{array}{*{20}{c}}
			{\frac{1}{\alpha }(M - I){M^{ - 1}}(M - I)}&{(M - I){M^{ - 1}}{C^T}}\\
			{C{M^{ - 1}}(M - I)}&{\alpha C{M^{ - 1}}{C^T}}
	\end{array}} \right]\\
	\nonumber &&\\
	\nonumber & = &\left[ {\begin{array}{*{20}{c}}
		{ - D}&{{C^T}}\\
		C&E
\end{array}} \right] - \left[ {\begin{array}{*{20}{c}}
		{\frac{1}{\alpha }(M - 2I + {M^{ - 1}})}&{(M - I){M^{ - 1}}{C^T}}\\
		{C{M^{ - 1}}(M - I)}&{\alpha C{M^{ - 1}}{C^T}}
\end{array}} \right]\\
	\nonumber &&\\
	\nonumber & = &\left[ {\begin{array}{*{20}{c}}
		{ - D}&{{C^T}}\\
		C&E
\end{array}} \right] - \left[ {\begin{array}{*{20}{c}}
		{ - D + \frac{1}{\alpha }{M^{ - 1}}}&{{C^T} - {M^{ - 1}}{C^T}}\\
		{C - C{M^{ - 1}}}&{\alpha C{M^{ - 1}}{C^T}}
\end{array}} \right]\\
	\nonumber &&\\
\nonumber & = &\left[ {\begin{array}{*{20}{c}}
		{ - \frac{1}{\alpha }{M^{ - 1}}}&{{M^{ - 1}}{C^T}}\\
		{C{M^{ - 1}}}&{E - \alpha C{M^{ - 1}}{C^T}}
\end{array}} \right].
\end{eqnarray}	
}
Now it is immediate to deduce \eqref{Sc2}. 
\end{proof}

In practice, it is difficult to check the invertibility of 
$\tilde{\mathcal{S}}$ in Proposition \ref{prop2.2}. However, it turns out that $\tilde{\mathcal{S}}$ can be efficiently factored  when \rt{$\lambda_{\max}{(D)}<2$.} 

\begin{proposition}\label{prop3.2}
Assume the same conditions as in Proposition \ref{prop2.2}, but \ro{with the additional assumption}
\rt{$\lambda_{\max}{(D)}<2$}. Then, the matrix $E$ is nonsingular if and only if $\mathcal{K}$ 
is nonsingular. 
\end{proposition}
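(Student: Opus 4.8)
The plan is to reduce the claim to Proposition~\ref{prop2.2} by a convenient choice of the free parameter $\alpha$, and then to recognize the relevant Schur complement of $\tilde{\mathcal{S}}$. Since $\lambda_{\max}(D) < 2$, the value $\alpha = 1$ satisfies the admissibility constraint \eqref{alpha} (and it is trivially admissible when $D = 0$, since then $2I - \alpha D = 2I \succ 0$ for every $\alpha > 0$). With this choice, Proposition~\ref{prop2.2} asserts that $\mathcal{K}$ is nonsingular if and only if
\[
\tilde{\mathcal{S}} = \begin{bmatrix} -(2I - D)^{-1} & (2I - D)^{-1}C^T \\ C(2I - D)^{-1} & E - C(2I - D)^{-1}C^T \end{bmatrix}
\]
is nonsingular.

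Next I would exploit the $2\times 2$ block structure of $\tilde{\mathcal{S}}$. Writing $M = 2I - D$, the hypothesis $\lambda_{\max}(D) < 2$ guarantees $M \succ 0$, so the $(1,1)$ block $-M^{-1}$ of $\tilde{\mathcal{S}}$ is invertible (indeed negative definite). Consequently $\tilde{\mathcal{S}}$ is nonsingular if and only if the Schur complement of this $(1,1)$ block is nonsingular. Using $\bigl(-M^{-1}\bigr)^{-1} = -M$, a direct computation gives
\[
\bigl(E - CM^{-1}C^T\bigr) - \bigl(CM^{-1}\bigr)\bigl(-M^{-1}\bigr)^{-1}\bigl(M^{-1}C^T\bigr) = \bigl(E - CM^{-1}C^T\bigr) + CM^{-1}C^T = E,
\]
so the Schur complement is exactly $E$.

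Chaining the two equivalences yields: $\mathcal{K}$ nonsingular $\iff$ $\tilde{\mathcal{S}}$ nonsingular $\iff$ $E$ nonsingular, which is the assertion. I do not anticipate a genuine obstacle here; the argument is essentially a one-line Schur-complement identity once the right value of $\alpha$ is fixed. The only points requiring a moment's care are checking that $\alpha = 1$ is permissible under \eqref{alpha} (handled by $\lambda_{\max}(D) < 2$ together with the degenerate case $D = 0$) and tracking the signs when inverting the negative-definite $(1,1)$ block $-M^{-1}$.
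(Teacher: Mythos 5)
Your proof is correct and follows essentially the same route as the paper: set $\alpha=1$ (admissible since $\lambda_{\max}(D)<2$) and then eliminate the invertible $(1,1)$ block of $\tilde{\mathcal{S}}$ to reveal $E$. The only cosmetic difference is that the paper first applies the congruence with $\mathrm{diag}\bigl((2I-D)^{-1/2},I\bigr)$ and then a block LDU factorization, whereas you compute the Schur complement of $-(2I-D)^{-1}$ directly; the underlying identity is identical.
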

\begin{proof}
	Using the same notation and quantities as in Proposition \ref{prop2.2}, \rt{we set $\alpha=1$ and it is immediate to observe that the condition of the proposition is fulfilled. Denoting $\bar{C}=C(2I-D)^{-1/2}$, we can verify that
	\[\tilde{\mathcal{S}}=\left[ {\begin{array}{*{20}{c}}
			{{{(2I - D)}^{ - \frac{1}{2}}}}&0\\
			0&I
	\end{array}} \right]\left[ {\begin{array}{*{20}{c}}
			{ - I}&{{\bar{C}^T}}\\
			\bar{C}&{E - \bar{C}{\bar{C}^T}}
	\end{array}} \right]\left[ {\begin{array}{*{20}{c}}
			{{{(2I - D)}^{ - \frac{1}{2}}}}&0\\
			0&I
	\end{array}} \right].\]
Hence, by Proposition \ref{prop2.2}, $\tilde{\mathcal{S}}$ is invertible if and only if the matrix
	\[\left[ {\begin{array}{*{20}{c}}
			{ - I}&{{\bar{C}^T}}\\
			\bar{C}&{E - \bar{C}{\bar{C}^T}}
	\end{array}} \right]\]
 is invertible. Straightforward algebraic computations reveal that 
\begin{equation}\label{decom1}
\left[ {\begin{array}{*{20}{c}}
		{ - I}&{{\bar{C}^T}}\\
		\bar{C}&{E - \bar{C}{\bar{C}^T}}
\end{array}} \right]= \left[ {\begin{array}{*{20}{c}}
			I&0\\
			{ - \bar{C}}&I
	\end{array}} \right]\left[ {\begin{array}{*{20}{c}}
			{ -I}&0\\
			0&E
	\end{array}} \right]\left[ {\begin{array}{*{20}{c}}
			I&{ - {\bar{C}^T}}\\
			0&I
	\end{array}} \right].
\end{equation}}
Now it is immediate to conclude the assertion. 
\end{proof}

\rt{By Proposition \ref{prop3.2}, setting $\alpha=1$ and using decomposition  \eqref{s5}, we can observe that the matrix $\tilde{\mathcal{K}}$ in \eqref{Cmatrix} can be written as follows:
	\begin{equation}\label{eq:cong}
		\tilde{\mathcal{K}}=	\left[ {\begin{array}{*{20}{c}}
				I&0&0\\
				{{B_1}\tilde{A}_1^{ - 1}}&I&0\\
				{CB\tilde{A}_1^{ - 1}}&{ - C}&I
		\end{array}} \right]\left[ {\begin{array}{*{20}{c}}
				{{\tilde{A}_1}}&0&0\\
				0&{ - {{(2I - D)}^{ - 1}}}&0\\
				0&0&E
		\end{array}} \right]\left[ {\begin{array}{*{20}{c}}
				I&{\tilde{A}_1^{ - 1}B_1^T}&{\tilde{A}_1^{ - 1}{B^TC^T}}\\
				0&I&{ - {C^T}}\\
				0&0&I
		\end{array}} \right],
	\end{equation}
where $\tilde{A}_1=A  +  {B^T}(2I - D)B$ and $B_1=B - DB$.
Hence, provided that $E$ is nonsingular, the inverse of $\tilde{\mathcal{K}}$ exists and it can be decomposed as
\begin{equation}\label{invdec}
	\tilde{\mathcal{K}}^{-1}=\left[ {\begin{array}{*{20}{c}}
			I&{ - {{\tilde{A}_1}^{ - 1}}{B_1^T}}&{ - {{\tilde{A}_1}^{ - 1}}{(B+B_1)^T}{C^T}}\\
			0&I&{{C^T}}\\
			0&0&I
	\end{array}} \right]\left[ {\begin{array}{*{20}{c}}
			{{{\tilde{A}_1}^{ - 1}}}&0&0\\
			0&{ -(2I-D)}&0\\
			0&0&{{E^{ - 1}}}
	\end{array}} \right]\left[ {\begin{array}{*{20}{c}}
			I&0&0\\
			{ - B_1{{\tilde{A}_1}^{ - 1}}}&I&0\\
			{ - C(B+B_1){{\tilde{A}_1}^{ - 1}}}&C&I
	\end{array}} \right].
\end{equation}}

\rt{It is evident that if we add {$\lambda_{\max}(D)<2$} to the assumptions of Proposition \ref{prop2.2},} then nonsingularity of $E$ is a necessary condition for the existence of $\mathcal{K}^{-1}$. As observed in the previous section, the existence of $E^{-1}$ is not always necessary for the nonsingularity of $\mathcal{K}$.
In the following theorem, we assume that $\mathcal{K}^{-1}$ exists and derive some relations between the nullity of the second block diagonal of  $\mathcal{K}^{-1}$ and $\n(A)$ and $\n(E)$. The proof of the theorem is inspired by \cite[Theorem 2.1]{SN2004}.

\begin{theorem}\label{thm3.1}
	Let $\mathcal{K}$ be invertible with the dimensions $n, m,$ and $p$ defined \eqref{eq:coef}, and
	consider the following partitioning of the inverse,
	\begin{equation}\label{strucinv}
		\mathcal{K}^{-1}=\left[ {\begin{array}{*{20}{c}}
				{{Z_{11}}}&{{Z_{12}}}&{{Z_{13}}}\\
				{{Z_{21}}}&{{Z_{22}}}&{{Z_{23}}}\\
				{{Z_{31}}}&{{Z_{32}}}&{{Z_{33}}}
		\end{array}} \right],	
	\end{equation}
	where $Z_{11},Z_{22}$ and $Z_{33}$ are square matrices with dimensions $n, m,$ and $p$, respectively. Then,
		\begin{equation}\label{eq22n}
		\min\{\max\{\n(A),\n(E)\},m\} \le	\n(Z_{22})\le \n(A)+\n(E).
	\end{equation}
In addition, if  condition \eqref{rang} is satisfied, then
	\begin{equation}\label{eq22}
	\min\{\n(A)+\n(E),m\} \le	\n(Z_{22})\le \n(A)+\n(E).
	\end{equation}

\end{theorem}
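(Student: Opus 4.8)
The plan is to work directly with the kernel/range structure of $\mathcal{K}$ as a linear map, exploiting the block-tridiagonal equations \eqref{hom} and the orthogonality relations that symmetric positive semidefiniteness forces. Since $\mathcal{K}$ is invertible and symmetric, I will identify $\k(Z_{22})$ with a suitable restriction: for $y\in\mathbb{R}^m$, $Z_{22}y=0$ means the (unique) solution $\bar u=[x;\,w;\,z]$ of $\mathcal{K}\bar u=[0;y;0]$ has middle block $w=0$, i.e. $\bar u=[x;0;z]$ with $Ax=0$, $Bx+C^Tz=-0$ wait — more precisely $Ax + B^T\!\cdot 0 = 0$, $Bx + C^Tz = y$, $Cy+Ez$... let me restate: $\mathcal{K}[x;0;z]=[Ax;\,Bx+C^Tz;\,Ez]$, so $Z_{22}y=0$ iff there exist $x\in\k(A)$, $z\in\k(E)$ with $Bx+C^Tz=y$. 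Hence $\rg(Z_{22}^{\text{restricted}})$-type reasoning gives the clean identity $\k(Z_{22})^{\perp}$-free description: $\{y: Z_{22}y=0\}$ has dimension equal to $\dim\bigl(B\k(A) + C^T\k(E)\bigr)$... but that is the \emph{image}, not the kernel. I need to be careful: I would instead show $\n(Z_{22}) = m - \dim\bigl(B\,\k(A) + C^T\,\k(E)\bigr)$ by checking that $y\mapsto$ (this solvability condition) identifies $\rg$ of the relevant map, and that no nonzero $[x;0;z]$ with $x\in\k(A)$, $z\in\k(E)$ maps to $0$ unless $x=0,z=0$ (which follows from invertibility of $\mathcal{K}$, since $[x;0;z]\in\k(\mathcal{K})$ would force it). This last point shows the map $\k(A)\times\k(E)\to\mathbb{R}^m$, $(x,z)\mapsto Bx+C^Tz$, is injective, hence $\dim\bigl(B\k(A)+C^T\k(E)\bigr) = \n(A)+\n(E) - \dim\bigl(B\k(A)\cap C^T\k(E)\bigr)$, and $B\k(A)\cap C^T\k(E) \subseteq \rg(B)\cap\rg(C^T)$.

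From that identity the two-sided bound \eqref{eq22n} should follow quickly. For the \textbf{upper bound} $\n(Z_{22})\le \n(A)+\n(E)$: since the map $(x,z)\mapsto Bx+C^Tz$ on $\k(A)\times\k(E)$ is injective (by invertibility of $\mathcal{K}$, as above), its image has dimension $\n(A)+\n(E)$, but also the image has dimension at most... hmm, I need $\n(Z_{22})\le\n(A)+\n(E)$, i.e. $m - \dim(\text{image})\le \n(A)+\n(E)$; this needs $\dim(\text{image})\ge m - \n(A)-\n(E)$, which is \emph{not} automatic. Let me reconsider — the upper bound on $\n(Z_{22})$ is probably gotten differently: a vector in $\k(Z_{22})$ corresponds to $[x;0;z]$ with $x\in\k(A)$, $z\in\k(E)$ mapping via $\mathcal{K}$ into the $y$-slot; the assignment $y\mapsto(x,z)$ is well-defined and linear (uniqueness from $\mathcal{K}^{-1}$) and \emph{injective on} $\k(Z_{22})$ because if $x=0$ and $z=0$ then $y=Bx+C^Tz=0$. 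So $\k(Z_{22})$ embeds into $\k(A)\times\k(E)$, giving $\n(Z_{22})\le\n(A)+\n(E)$ directly. Good — that is the clean argument for the upper bound, and it needs no extra hypotheses, consistent with \eqref{eq22n}.

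For the \textbf{lower bounds}: trivially $\n(Z_{22})\ge 0$ and $\n(Z_{22})\le m$, so I must produce elements of $\k(Z_{22})$. Given $x\in\k(A)$ with $Bx\ne0$, consider whether $[x;0;0]\in\rg$ through $\mathcal{K}$... $\mathcal{K}[x;0;z]=[0;Bx+C^Tz;0]$, so $y:=Bx+C^Tz$ lies in $\k(Z_{22})$ for \emph{any} $z\in\k(E)$ — wait no, I need $z\in\k(E)$ for the bottom block $Ez$ to vanish, yes. Taking $z=0$ (legal, $0\in\k(E)$) gives $y=Bx\in\k(Z_{22})$ for every $x\in\k(A)$; similarly $C^Tz\in\k(Z_{22})$ for every $z\in\k(E)$. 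Hence $B\k(A)+C^T\k(E)\subseteq\k(Z_{22})$, so $\n(Z_{22})\ge\dim\bigl(B\k(A)+C^T\k(E)\bigr)$. Now $\dim B\k(A)\ge\n(A)-\dim(\k(A)\cap\k(B))=\n(A)$ using necessary condition $(i)$... but $(i)$ isn't assumed in Theorem~\ref{thm3.1}! However $\mathcal{K}$ invertible $\Rightarrow$ $(i)$–$(iii)$ hold by Proposition~\ref{prop1.1}, so $\dim B\k(A)=\n(A)$ and $\dim C^T\k(E)=\n(E)$. Therefore $\n(Z_{22})\ge\max\{\n(A),\n(E)\}$, and combined with $\n(Z_{22})\le m$ this gives the lower bound $\min\{\max\{\n(A),\n(E)\},m\}$ in \eqref{eq22n}. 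Under the extra hypothesis \eqref{rang}, $B\k(A)\cap C^T\k(E)\subseteq\rg(B)\cap\rg(C^T)=\{0\}$, so $\dim\bigl(B\k(A)+C^T\k(E)\bigr)=\n(A)+\n(E)$ exactly, yielding $\n(Z_{22})\ge\min\{\n(A)+\n(E),m\}$ and hence \eqref{eq22}.

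\textbf{Main obstacle.} The delicate part is the bookkeeping that converts "solvability condition on $y$" into a clean dimension count, and in particular making sure the embedding arguments use only invertibility of $\mathcal{K}$ (plus, where needed, the consequent conditions $(i)$–$(iii)$ and \eqref{rang}), rather than silently assuming $A$ or $E$ has a convenient nullity. A secondary check: the lower bound in \eqref{eq22n} must cap at $m$, which is automatic since $Z_{22}$ is $m\times m$; I should make sure the argument shows $\k(Z_{22})\supseteq B\k(A)+C^T\k(E)$ as a genuine subspace of $\mathbb{R}^m$ and then intersect with the trivial $\n(Z_{22})\le m$ bound, exactly as the $\min$ in the statement suggests. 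I would also double-check the $\cite[Theorem 2.1]{SN2004}$-style argument it is "inspired by" to see whether a slicker route via Schur complements of $\mathcal{K}^{-1}$ is available, but the direct kernel argument above looks self-contained.
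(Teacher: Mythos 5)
Your argument is correct, and its two halves run on the same engine as the paper's proof: for the lower bounds you observe that $B\,\k(A)\subseteq \k(Z_{22})$ and $C^T\k(E)\subseteq \k(Z_{22})$ and count dimensions using conditions $(i)$ and $(iii)$ of Proposition~\ref{prop1.1} (with \eqref{rang} supplying joint independence for \eqref{eq22}), which is precisely the paper's computation $Z_{22}[BN(A)\ \ C^TN(E)]=0$ with basis matrices; for the upper bound you embed $\k(Z_{22})$ injectively into $\k(A)\times\k(E)$ via $y\mapsto (Z_{12}y,Z_{32}y)$, which is the paper's argument with $[Z_{12}N(Z_{22});Z_{32}N(Z_{22})]$. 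What your packaging adds is the exact characterization $\k(Z_{22})=B\,\k(A)+C^T\k(E)$, read off from the unique solution of $\mathcal{K}\bar u=[0;y;0]$; combined with the injectivity of $(x,z)\mapsto Bx+C^Tz$ on $\k(A)\times\k(E)$, which you correctly deduce from $\k(\mathcal{K})=\{0\}$ alone (no need for \eqref{rang}), it in fact yields the equality $\n(Z_{22})=\n(A)+\n(E)$ whenever $\mathcal{K}$ is invertible, a sharper statement than either displayed bound, and it also shows invertibility forces $\n(A)+\n(E)\le m$, so the $\min$'s are never active. Two mid-stream slips are harmless because your final chain bypasses them: the assertion $\n(Z_{22})=m-\dim\bigl(B\,\k(A)+C^T\k(E)\bigr)$ is wrong (the admissible $y$'s form the kernel itself, so its dimension equals $\dim\bigl(B\,\k(A)+C^T\k(E)\bigr)$, not the codimension), and the inclusion--exclusion formula you quote is redundant once injectivity of the sum map is established, since that injectivity already forces $B\,\k(A)\cap C^T\k(E)=\{0\}$.
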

\begin{proof}	
	Given a matrix $W$,  let $N(W)$ denote a matrix whose columns form a basis for $\k(W)$. In fact, the number of columns of  $N(W)$ is the nullity of $W$.
	To verify relations \eqref{eq22n} and \eqref{eq22}, we use $\mathcal{K} \mathcal{K}^{-1}=\mathcal{K}^{-1}\mathcal{K}=I$. First, note that
	\begin{eqnarray*}
		0=(\mathcal{K}^{-1}\mathcal{K})_{21} &=& Z_{21}A+Z_{22}B \\
		0=(\mathcal{K}^{-1}\mathcal{K})_{23}  &=& Z_{22} C^T+Z_{23}E.
	\end{eqnarray*}
Consequently, we get
	\begin{equation}\label{eq2.2}
		Z_{22}[BN(A) \quad  C^TN(E)]=0.
	\end{equation}
Note that $BN(A)$ and $C^TN(E)$ have full \ro{column rank.} Indeed, if there exist $y_1$ and $y_2$ such that 	\[
	BN(A)y_1 =0 \quad \text{and} \quad C^TN(E)y_2=0,
	\]
	then
\begin{equation}\label{con1}
		N(A)y_1 \in \k(A)\cap \k(B)\quad \text{and} \quad N(E)y_2 \in \k(C^T)\cap \k(E).
\end{equation}
		Since $\mathcal{K}$ is nonsingular, by Proposition \ref{prop1.1}, the above relations yield  
	\[
	N(A)y_1 =0 \quad \text{and} \quad N(E)y_2=0,
	\]
	which ensures that $y_1=0$ and $y_2=0$. Therefore, we conclude
\begin{equation}\label{con2}
		\n(Z_{22}) \ge \min\{\n(A),m\} \quad \text{and} \quad 	\n(Z_{22}) \ge \min\{\n(E),m\}.
\end{equation}
If \eqref{rang} is satisfied, we show that the columns of  $[BN(A) \quad  C^TN(E)]$ are linearly independent.
	To this end, let the vector $y=[y_1;y_2]$ \ro{be such that} 
	\[
	BN(A)y_1+ C^TN(E)y_2=0.
	\] 
	The above relation together with \eqref{rang} imply that
	\[
	BN(A)y_1 =0 \quad \text{and} \quad C^TN(E)y_2=0,
	\]
	which leads to \eqref{con1}.
Hence, we deduce that the vectors $y_1$ and $y_2$ are both zero, and \eqref{eq2.2} implies that
	\begin{equation}\label{eq2.3}
		\n(Z_{22}) \ge \min\{\n(A)+\n(E),m\}.
	\end{equation}
	Using the following identities
	\begin{eqnarray*}
		0 = (\mathcal{K}\mathcal{K}^{-1})_{12} &=& AZ_{12}+B^TZ_{22}  \\
		0 = (\mathcal{K}\mathcal{K}^{-1})_{32} &=& CZ_{22}+EZ_{32},
	\end{eqnarray*}
	we find
	\[
	AZ_{12}N(Z_{22})=0 \quad \text{and} \quad EZ_{32}N(Z_{22})=0,
	\]
	which is equivalent to saying that
	\begin{equation}\label{eq2.4}
		\left[ {\begin{array}{*{20}{c}}
				A&0\\
				0&E
		\end{array}} \right]\left[ {\begin{array}{*{20}{c}}
				{{Z_{12}}N(Z_{22})}\\
				{{Z_{32}}N(Z_{22})}
		\end{array}} \right] = \left[ {\begin{array}{*{20}{c}}
				0\\
				0
		\end{array}} \right].
	\end{equation}
	In the sequel, we first show that the columns of $[{{Z_{12}}N(Z_{22})};{{Z_{32}}N(Z_{22})}]$
	are linearly independent. To do so, let 
	\[
	\left[ {\begin{array}{*{20}{c}}
			{{Z_{12}}N(Z_{22})}\\
			{{Z_{32}}N(Z_{22})}
	\end{array}} \right]y=0.
	\]
	As a result, we have ${{Z_{12}}N(Z_{22})}y=0$ and ${Z_{32}}N(Z_{22})y=0$. Therefore, bearing in mind that ${Z_{22}}N({Z_{22}})$ is zero, we conclude that
	\begin{eqnarray*}
		\mathcal{K}^{-1} \left[ {\begin{array}{*{20}{c}}
				0\\
				{N({Z_{22}})y}\\
				0
		\end{array}} \right] = \left[ {\begin{array}{*{20}{c}}
				{{Z_{12}}N({Z_{22}})y}\\
				{{Z_{22}}N({Z_{22}})y}\\
				{{Z_{32}}N({Z_{22}})y}
		\end{array}} \right]=\left[ {\begin{array}{*{20}{c}}
				0\\
				0\\
				0
		\end{array}} \right].
	\end{eqnarray*}
	
	From the above relation, it is immediate to conclude that $N(Z_{22})y=0$, which implies $y=0$. By \eqref{eq2.4}, we have
	\[
	\n(A)+\n(E) \ge \n(Z_{22}).
	\]
The above relation together with \eqref{con2} and \eqref{eq2.3} shows that both \eqref{eq22n} and \eqref{eq22} hold.
\end{proof}

We end this part by commenting that if $\n(E)=0$, regardless of condition \eqref{rang}, the following \ro{relations} hold
\[
\min\{\n(A),m\} \le	\n(Z_{22})\le \n(A),
\]
by Theorem \ref{thm3.1}. 	In particular, if $\n(A)=m$  then  $Z_{22}$ is the zero matrix. 

\subsection{An explicit formula for the inverse}

{As pointed out in the previous subsection, the nonsingularity of $\mathcal{K}$ implies the existence of $E^{-1}$ under certain conditions. In addition, by Theorem \ref{thm3.1},  the second block of $\mathcal{K}^{-1}$ \ro{is} zero when  $\n(A)=m$ and $\n(E)=0$.
 We now assume that $E$ is nonsingular and derive an explicit formula for the inverse of $\mathcal{K}$ without imposing any restrictions on  $D$.   Define
\begin{equation}
V=Z(Z^TAZ)^{-1}Z^T,
\label{eq:V}
\end{equation}
where $Z$ is a matrix whose columns form a basis for $\k(B)$. In the context of constrained optimization, the matrix $Z^T A Z$ is known as the reduced Hessian and it plays an important role in null-space methods \cite{GMSW1991}. Here we assume that $A\succeq 0$ and $\k(A) \cap \k(B) =\{0\}$, which ensures the nonsingularity of $Z^TAZ$ \cite{G1985}.

We start by establishing the following two propositions that facilitate the derivation of a 
formula for $\mathcal{K}^{-1}$. 
\begin{proposition}\label{prop2.1}
Let  $A\succeq 0$ with $\n{(A)}=m$, and suppose condition \eqref{direct} holds. Then
$$
A=AVA, 
$$
where $V$ is defined in \eqref{eq:V}.
\end{proposition}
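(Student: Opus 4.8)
The plan is to show that $A = AVA$ by exploiting the splitting $\mathbb{R}^n = \k(A) \oplus \k(B)$ from \eqref{direct}. Write an arbitrary vector $v \in \mathbb{R}^n$ as $v = v_1 + v_2$ with $v_1 \in \k(A)$ and $v_2 \in \k(B)$, so that $Av = Av_2$. Since $v_2 \in \k(B)$ and the columns of $Z$ form a basis of $\k(B)$, we can write $v_2 = Z\tilde v$ for some $\tilde v \in \mathbb{R}^{n-m}$ (here $\n(A)=m$ forces $\ra A = n-m$, hence $\dim \k(B) = n-m$ by \eqref{direct}; note $\k(A)\cap\k(B)=\{0\}$ is automatic since \eqref{direct} is a direct sum). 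Then $VAv = VAZ\tilde v = Z(Z^TAZ)^{-1}Z^T A Z\tilde v = Z\tilde v = v_2$, using that $Z^TAZ$ is nonsingular (guaranteed by $A\succeq 0$ and $\k(A)\cap\k(B)=\{0\}$, per \cite{G1985}). Finally $AVAv = Av_2 = Av$. Since $v$ was arbitrary, $AVA = A$.

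The key steps in order: first, record that $\n(A)=m$ together with \eqref{direct} forces $\dim\k(B) = n-m$, so $Z$ has exactly $n-m$ columns and $Z^TAZ$ is an $(n-m)\times(n-m)$ nonsingular matrix; second, decompose an arbitrary $v$ along the direct sum and reduce $Av$ to $Av_2$; third, express $v_2 = Z\tilde v$ and compute $VAv = v_2$ directly by cancellation in the definition \eqref{eq:V} of $V$; fourth, apply $A$ to conclude $AVAv = Av$ for all $v$.

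I do not expect a genuine obstacle here: the statement is essentially the assertion that $V$ acts as a one-sided (in fact, when restricted appropriately, two-sided) inverse of $A$ on the complementary subspace $\k(B)$, which falls out immediately once the dimension count is in place. The only point requiring a line of care is verifying that $Z^TAZ$ is square and invertible — this rests on $\n(A)=m$ (giving the right column count for $Z$) and on the cited result that semidefiniteness of $A$ plus trivial intersection of kernels makes the reduced Hessian nonsingular. An alternative, coordinate-free phrasing would observe that $V$ is the oblique projector-like map characterized by $\rg(V) = \k(B)$ and $\k(V) = \k(B)^{\perp}$ composed with inversion of $A$ on $\k(B)$, but the explicit computation above is shorter and self-contained.
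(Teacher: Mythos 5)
Your proof is correct and follows essentially the same route as the paper: decompose an arbitrary vector along the direct sum \eqref{direct}, discard the $\k(A)$ component, write the $\k(B)$ component as $Z\tilde v$, and use the cancellation $Z(Z^TAZ)^{-1}Z^TAZ\tilde v = Z\tilde v$ (the paper phrases this as $AVAZ = AZ$, which is the same computation). The side remarks on the column count of $Z$ and the nonsingularity of $Z^TAZ$ match the paper's setup preceding the proposition.
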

\begin{proof}
Let $Z$ be a matrix whose columns form a basis for $\k(B)$ and consider Eq.\  \eqref{eq:V}. We prove the desired result by verifying that for any vector $x$,
\begin{equation}\label{eq2.6}
	Ax=AVAx.
\end{equation}
We have 
\begin{equation}
\begin{aligned}
	AVAZ & = AZ(Z^TAZ)^{-1}Z^TAZ\\
	& = AZ. \label{eq2.5}
\end{aligned}
\end{equation}
 Given an arbitrary vector $x$, by \eqref{direct} we can write  $x=x_1+x_2$ where $x_1\in \k(A)$ and $x_2\in \k(B)$. Trivially, $A x_1=0$, and hence, we  need to show the validity of  \eqref{eq2.6} for $x_2\in \k(B)$. We can write $x_2=Z\tilde{x}$ for some $\tilde{x}$. Consequently,  \eqref{eq2.6} can be rewritten as 
	\[
	AZ\tilde{x}=AVAZ\tilde{x},
	\]
	which completes the proof, using \eqref{eq2.5}.
\end{proof}

Suppose $Z$ is a matrix whose columns form an orthonormal basis for $\k(B)$. This ensures that
\begin{equation}
	{B^T}{(B{B^T})^{ - 1}}B = I - Z{Z^T}.
\label{eq:Z}
\end{equation}
Relation \eqref{eq:Z} will come handy in the proofs of Theorems \ref{th2.3} and \ref{th2.3n}. 

In \cite{EG2015} an explicit formula for the inverse \ro{of a classical two-by-two saddle point system} is derived, which shows that if $\n{(A)}=m$ and the trailing main (2,2) block is zero, then the inverse has a trailing zero block as well. The existence of the trailing zero block can also be established by the rank relations analyzed in \cite{SN2004}. In \cite{WG2020} the nonzero structure of the inverse of a matrix for an incompressible magnetohydrodynamics model problem is used to design a sparse approximate inverse as a preconditioner.

Below we show that the trailing block of the inverse remains zero when the trailing block  of the matrix is nonzero.
\begin{theorem}\label{th2.3}
Suppose $A\succeq 0$ with  $\n{(A)}=m$, and assume condition \eqref{direct} holds. Then 
	\[
	 \hat{\mathcal{K}}:={\left[ {\begin{array}{*{20}{c}}
				A&{{B^T}}\\
				B&{ - D}
		\end{array}} \right]}
	\]
is invertible and its inverse is given by 
\begin{equation}\label{inv}
		 \hat{\mathcal{K}}^{-1}=\left[ {\begin{array}{*{20}{c}}
			{  (I - VA){B^T}{{(B{B^T})}^{ - 1}}D{{(B{B^T})}^{ - 1}}B(I - AV) + V}&{}&{(I - VA){B^T}{{(B{B^T})}^{ - 1}}}\\
			{{{(B{B^T})}^{ - 1}}B(I - AV)}&{}&0
	\end{array}} \right],
\end{equation}
where $V$ is as  in \eqref{eq:V} with $Z\in \mathbb{R}^{n \times (n-m)}$ being any matrix whose columns form an orthonormal basis for $\k(B)$.
\end{theorem}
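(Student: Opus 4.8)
The plan is to verify directly that the proposed matrix in \eqref{inv} is a two-sided inverse of $\hat{\mathcal{K}}$, using the algebraic identities available: $A=AVA$ from Proposition~\ref{prop2.1}, the projector identity \eqref{eq:Z} (which for an orthonormal basis $Z$ of $\k(B)$ reads $B^T(BB^T)^{-1}B = I - ZZ^T$), and the elementary facts $BZ = 0$, $B^T(BB^T)^{-1}B B^T (BB^T)^{-1} = B^T(BB^T)^{-1}$, and $AZ(Z^TAZ)^{-1}Z^TAZ = AZ$ from the proof of Proposition~\ref{prop2.1}. Write $P := B^T(BB^T)^{-1}B = I - ZZ^T$ for the orthogonal projector onto $\rg(B^T) = \k(B)^\perp$, and note $VA$ and $AV$ act as identity-like operators on $\rg(B^T)$: from $AVAZ = AZ$ one gets $(I-VA)$ annihilates $AZ$, i.e. $(I-VA)A = (I-VA)AP$ (since $A(I-P)$ maps into $\rg(AZ)$... this needs the direct-sum hypothesis \eqref{direct}), and symmetrically for $AV$. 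The key consequences I will establish first, as a lemma-free preliminary block of identities, are: (a) $VAV = V$ and $AVA = A$; (b) $(B^T(BB^T)^{-1})^T = (BB^T)^{-1}B$ annihilates $\k(B^T)$-components appropriately, so that $B\cdot (I-VA)B^T = 0$... — actually the crucial one is $B(I-VA) = BP - BVA$; since $V = Z(Z^TAZ)^{-1}Z^T$ has range in $\k(B)$, $BV = 0$, hence $BVA = 0$ and $B(I-VA) = B$. Similarly $(I-AV)B^T = B^T$. These collapse many cross-terms.

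Concretely, I would compute the four blocks of $\hat{\mathcal{K}}\,\hat{\mathcal{K}}^{-1}$ (and then check $\hat{\mathcal{K}}^{-1}\hat{\mathcal{K}}$, or invoke symmetry: $\hat{\mathcal{K}}$ is symmetric and the proposed inverse is visibly symmetric, so a one-sided check suffices). For the $(2,1)$ and $(2,2)$ blocks of $\hat{\mathcal{K}}\hat{\mathcal{K}}^{-1}$, i.e. the rows $\begin{bmatrix} B & -D\end{bmatrix}$ times the two columns: using $BV=0$ and $B(I-VA)B^T(BB^T)^{-1} = B\cdot B^T(BB^T)^{-1} = I$ (because $BVAB^T(BB^T)^{-1}$ vanishes as $BV=0$), the $(2,2)$ block gives $B\cdot(I-VA)B^T(BB^T)^{-1} - D\cdot 0 = I$, and the $(2,1)$ block unwinds to $B\big[(I-VA)B^T(BB^T)^{-1}D(BB^T)^{-1}B(I-AV) + V\big] - D(BB^T)^{-1}B(I-AV) = D(BB^T)^{-1}B(I-AV) - D(BB^T)^{-1}B(I-AV) = 0$ after again using $BV=0$ and $B(I-VA)B^T(BB^T)^{-1}=I$. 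For the $(1,1)$ and $(1,2)$ blocks, the rows are $\begin{bmatrix} A & B^T\end{bmatrix}$; here I use $A(I-VA) = A - AVA = 0$ (Proposition~\ref{prop2.1}), which kills the first summand and the $V$ contribution requires $AV$: $AV = AZ(Z^TAZ)^{-1}Z^T$, and $AVA=A$ gives $AV$ acting as identity on $\rg(A)$. The $(1,2)$ block becomes $A(I-VA)B^T(BB^T)^{-1} + B^T\cdot 0 = 0$; the $(1,1)$ block becomes $A\cdot V + B^T(BB^T)^{-1}B(I-AV) = AV + P(I-AV) = AV + P - PAV$. Then I must show $AV + P - PAV = I$, equivalently $(I-P)(I - AV) = I - AV - AV + PAV$... — rather: $I - AV - P + PAV = (I-P)(I-AV)$, and $(I-P) = ZZ^T$, while $(I-AV)$ has the property that $Z^T(I-AV)$ relates to $Z^T - Z^TAZ(Z^TAZ)^{-1}Z^T = 0$, so $ZZ^T(I-AV) = Z(Z^T - Z^TAV)$; since $Z^TAV = Z^TAZ(Z^TAZ)^{-1}Z^T = Z^T$, this is $Z\cdot 0 = 0$. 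Hence $AV + P - PAV = I$ as needed.

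The main obstacle I anticipate is bookkeeping discipline rather than a deep idea: the $(1,1)$ block of the inverse is a long expression and one must repeatedly and correctly apply $A(I-VA)=0$, $(I-AV)A=0$, $BV=0$, $VB^T=0$, $PB^T = B^T$, $BP = B$, and the two projector identities, being careful that $V$ and $P$ do \emph{not} commute with $A$ in general and that identities like $AVA=A$ hold only because of the maximal-rank-deficiency hypothesis $\n(A)=m$ together with \eqref{direct}. A secondary subtlety is that invertibility of $\hat{\mathcal{K}}$ must be asserted before exhibiting the inverse — but this follows immediately from Theorem~\ref{thm1} (or directly: $\k(A)\cap\k(B)=\{0\}$ holds since \eqref{direct} is a direct sum, $\k(B^T)\cap\k(D)\cap\k(B)$... with $E,C$ absent the relevant condition reduces to $\k(B^T)\cap\k(D)=\{0\}$) — or, most cleanly, invertibility is a \emph{consequence} of having produced a two-sided inverse, so I would simply verify $\hat{\mathcal{K}}\hat{\mathcal{K}}^{-1} = I = \hat{\mathcal{K}}^{-1}\hat{\mathcal{K}}$ and conclude both invertibility and the formula in one stroke.
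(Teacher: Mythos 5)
Your proposal is correct and is essentially the paper's argument: the formula is confirmed by direct block multiplication using $AVA=A$ (Proposition~\ref{prop2.1}), $BV=0$, the projector identity $B^T(BB^T)^{-1}B=I-ZZ^T$ from \eqref{eq:Z}, and $ZZ^TAV=ZZ^T$, which are exactly the identities the paper invokes. The only (harmless) difference is that the paper first establishes invertibility separately by a null-space argument on $\hat{\mathcal{K}}\hat{u}=0$ (decomposing $x=x_1+x_2$ via \eqref{direct} and using the invertibility of $Z^TAZ$), whereas you correctly observe that exhibiting a one-sided inverse of a square matrix already settles invertibility, making that step redundant.
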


\begin{proof}
Assume that $\hat{\mathcal{K}}\hat{u}=0$ where $\hat{u}=[x;y]$.
	Because of \eqref{direct}, we have $x=x_1+x_2$ where $x_1\in \k(A)$ and $x_2\in \k(B)$. Notice that one can verify that $x_2=Z\hat{x}_2$ for some $\hat{x}_2\in \mathbb{R}^{(n-m)}$.
As a result, we get 
	\begin{subequations}
	\begin{eqnarray}
		AZ\hat{x}_2+B^Ty & = & 0 \label{2.5a}\\
		Bx_1-Dy & = & 0. \label{2.5b}
	\end{eqnarray}\end{subequations}
	From here we can proceed similarly to the way null-space methods are derived \cite{GMSW1991}. 
Multiplying Eq.\  \eqref{2.5a} by $Z^T$ from the left and using $Z^T B^T =0$, we obtain $Z^T A Z \hat{x}_2$ and conclude that $\hat{x}_2$ is zero invoking the fact that $Z^TAZ$ is invertible.  Also, we can observe that $y$ is zero from Eq.\  \eqref{2.5a} and the fact that  the columns of $B^T$ are linearly independent. From \eqref{2.5b}, we can deduce that $x_1=0$, which establishes the nonsingularity of $\hat{\mathcal{K}}$.

From \eqref{eq:V} it follows that
\begin{equation}\label{VAZZsym}
	ZZ^T AV=ZZ^T A Z(Z^TAZ)^{-1}Z^T =ZZ^T.
\end{equation}
Using  \eqref{eq:Z}, \eqref{VAZZsym} and Proposition \ref{prop2.1} yields $A(I-VA)=(I-AV)A=0$. It can now be readily verified that  $\hat{\mathcal{K}}^{-1}\hat{\mathcal{K}}=I$ where $\hat{\mathcal{K}}^{-1}$ is given by \eqref{inv}, as required, 
\end{proof}

 Under certain conditions, we can further derive an explicit formula
for the inverse of $\mathcal{K}$ given by \eqref{eq:coef}. To this end, we present the following theorem.

\begin{theorem}\label{th2.3n}
Suppose that  $A\succeq 0$  with $\n{(A)}=m$ and condition \eqref{direct} holds. If the matrix $E$ is symmetric and nonsingular, then the inverse of $\mathcal{K}$ is given by
		\begin{equation}\label{inv2}
			{\mathcal{K}}^{-1}=\left[ {\begin{array}{*{20}{c}}
					T&{{R^T}}&{{S^T}}\\
					R&0&0\\
					S&0&{{E^{ - 1}}}
			\end{array}} \right],
		\end{equation}
		where
		\begin{eqnarray*}
			T&:=&{(I - VA){B^T}{{(B{B^T})}^{ - 1}}\left( {D + {C^T}{E^{ - 1}}C} \right){{(B{B^T})}^{ - 1}}B(I - AV)}+V\\
			R&:=&{{{(B{B^T})}^{ - 1}}B(I - AV)}\\
			S&:=&{ - {E^{ - 1}}C{{(B{B^T})}^{ - 1}}B(I - AV)},
		\end{eqnarray*}
		and $V$ is as defined in \eqref{eq:V}, where $Z\in \mathbb{R}^{n \times (n-m)}$ is any matrix whose columns form an orthogonal basis for $\k(B)$.
\end{theorem}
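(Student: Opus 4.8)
The plan is to verify directly that the matrix on the right-hand side of \eqref{inv2} is indeed $\mathcal{K}^{-1}$, by first establishing nonsingularity of $\mathcal{K}$ and then checking the product equals the identity. Nonsingularity follows immediately from Theorem \ref{th2.3}: since $E$ is symmetric nonsingular, $\k(C^T)\cap\k(E)=\{0\}$ trivially, and $\k(A)\cap\k(B)=\{0\}$ holds because \eqref{direct} is assumed; combined with $A,E\succeq 0$ (note $E$ symmetric nonsingular does not mean $E\succeq 0$, so I would instead argue directly via the homogeneous system as in the proof of Theorem \ref{th2.3}), one shows $\mathcal{K}\bar u=0$ forces $\bar u=0$. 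Concretely, writing $\bar u=[x;y;z]$ and decomposing $x=x_1+x_2$ with $x_1\in\k(A)$, $x_2=Z\hat x_2\in\k(B)$, the equation $Cy+Ez=0$ gives $z=-E^{-1}Cy$, substitute into the middle block equation, multiply the first block equation by $Z^T$ to kill $x_2$ using invertibility of $Z^TAZ$, then $B^Ty=0$ forces $y=0$ (columns of $B^T$ independent by \eqref{direct}), hence $z=0$ and $x_1=0$.

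Once nonsingularity is in hand, the core of the proof is the algebraic identity $\mathcal{K}\,\mathcal{M}=I$ where $\mathcal{M}$ denotes the block matrix in \eqref{inv2}. The main tools are the two identities already recorded in the proof of Theorem \ref{th2.3}: namely $A(I-VA)=(I-AV)A=0$ (from Proposition \ref{prop2.1} together with \eqref{eq:Z} and \eqref{VAZZsym}), and the orthonormality relation \eqref{eq:Z}, i.e.\ $B^T(BB^T)^{-1}B=I-ZZ^T$. I would multiply out $\mathcal{K}\mathcal{M}$ block by block. For instance the $(1,1)$ entry is $AT+B^TR$; using $AV$ and $A(I-VA)=0$ one finds $AT=AV$, and $B^TR=B^T(BB^T)^{-1}B(I-AV)=(I-ZZ^T)(I-AV)$; then $AV+(I-ZZ^T)(I-AV)$ should collapse to $I$ after using $ZZ^TAV=ZZ^T$ from \eqref{VAZZsym} and $AZ(Z^TAZ)^{-1}Z^T=AV$. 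The $(2,1)$ entry $BT-DR$ should vanish because $BV=0$ (since $V=Z(\cdots)Z^T$ and $BZ=0$) and $B(I-VA)B^T(BB^T)^{-1}=BB^T(BB^T)^{-1}=I$ makes $BT$ cancel $DR$; the $(3,1)$ entry $CT+ES$ vanishes because $ES=-C(BB^T)^{-1}B(I-AV)$ cancels the $C^TE^{-1}C$-part of $T$ after pre-multiplying by $C$ and using $CV$-terms carefully. The remaining columns are easier: the $(*,2)$ block of $\mathcal{M}$ is $[R^T;0;0]$ and one checks $AR^T+B^T\cdot 0=0$? — no, here I should instead verify $\mathcal{M}\mathcal{K}=I$ or check $\mathcal{K}\mathcal{M}$ carefully; the second block column of $\mathcal{K}\mathcal{M}$ is $[AR^T;BR^T;CR^T]$ — wait, that is not obviously $[0;I;0]$, so the cleanest route is to check $\mathcal{K}\mathcal{M}=I$ using the full block structure of $\mathcal{K}$ including the zeros in positions $(1,3)$ and $(3,1)$.

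The step I expect to be the main obstacle is the $(1,1)$ block identity, specifically managing the term $(I-VA)B^T(BB^T)^{-1}(D+C^TE^{-1}C)(BB^T)^{-1}B(I-AV)$ under left-multiplication by $A$: I need $A(I-VA)=0$ to kill it entirely, leaving only $AV$, and then I must show $AV+B^T(BB^T)^{-1}B(I-AV)=I$, which requires the interplay of $ZZ^T+B^T(BB^T)^{-1}B=I$, the relation $ZZ^TAV=ZZ^T$, and $AV=AV$ being symmetric with range in $\rg(A)$. A secondary subtlety is ensuring the asymmetric-looking cross terms involving $C$, $E^{-1}$, and $(BB^T)^{-1}B(I-AV)$ cancel exactly in the $(3,1)$ and $(1,3)$ positions — this is where symmetry of $E$ (so that $S^T=-(I-VA)B^T(BB^T)^{-1}C^TE^{-1}$) is used. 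I would present the verification compactly by first recording the lemmas $A(I-VA)=(I-AV)A=0$, $BV=0$, $B(I-AV)=B$, and $B^T(BB^T)^{-1}B+ZZ^T=I$, and then remarking that each block of $\mathcal{K}\mathcal{M}-I$ reduces to $0$ by direct substitution, deferring the routine matrix arithmetic.
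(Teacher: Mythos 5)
Your proposal is essentially the paper's own proof: a direct block-by-block verification that the displayed matrix is a one-sided (hence, for a square matrix, two-sided) inverse of $\mathcal{K}$, resting on exactly the identities the paper invokes, namely $A=AVA$ from Proposition~\ref{prop2.1}, $BV=0$ (so $B(I-VA)=B$ and $VB^T=0$), $B^T(BB^T)^{-1}B=I-ZZ^T$ from \eqref{eq:Z}, and $ZZ^TAV=ZZ^T$ from \eqref{VAZZsym}; the only differences are that the paper checks $\mathcal{K}^{-1}\mathcal{K}=I$ rather than $\mathcal{K}\mathcal{M}=I$ (equivalent here by symmetry, and either one-sided check suffices), and that your preliminary nonsingularity argument is redundant once the product identity is established. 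Your hesitation about the second block column of $\mathcal{K}\mathcal{M}$ resolves immediately: it equals $[AR^T;BR^T;0]$ (the last entry vanishes because the $(3,1)$ block of $\mathcal{K}$ is zero), with $AR^T=A(I-VA)B^T(BB^T)^{-1}=0$ and $BR^T=BB^T(BB^T)^{-1}=I$. One correction: the auxiliary identity $B(I-AV)=B$ in your closing lemma list is false in general (e.g.\ $A=\left[\begin{smallmatrix}1&1\\1&1\end{smallmatrix}\right]$, $B=\left[\begin{smallmatrix}1&0\end{smallmatrix}\right]$ gives $B(I-AV)=\left[\begin{smallmatrix}1&-1\end{smallmatrix}\right]$), and $AV$ need not be symmetric; what your computation actually uses is $B(I-VA)=B$ and $ZZ^TAV=ZZ^T$, both of which you already have, so these slips do not affect the argument.
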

\begin{proof}
	\ro{
To conclude the assertion, we verify that $	{\mathcal{K}}^{-1}\mathcal{K}=I$ where $	{\mathcal{K}}^{-1}$ is given by \eqref{inv2}.
{Using Proposition \ref{prop2.1}, Eqs.\  \eqref{eq:Z} and \eqref{VAZZsym}, we can conclude}
\begin{eqnarray*}
({\mathcal{K}}^{-1}\mathcal{K})_{11}&=&TA+R^TB \\
&=&VA+(I-VA)B^T(BB^T)^{-1}B\\
& = & VA +(I-VA)(I-ZZ^T)\\
&=& I-ZZ^T+VAZZ^T=I.
\end{eqnarray*}
Taking into account that $VB^T=(BV)^T$ is zero, and applying some algebraic computations, we can check that $({\mathcal{K}}^{-1}\mathcal{K})_{12}=0$, $({\mathcal{K}}^{-1}\mathcal{K})_{32}=0$ and $({\mathcal{K}}^{-1}\mathcal{K})_{22}=I$. We can immediately conclude from Proposition \ref{prop2.1} that $({\mathcal{K}}^{-1}\mathcal{K})_{21}=0$ and $({\mathcal{K}}^{-1}\mathcal{K})_{31}=0$. In addition, straightforward algebraic computations reveal that $({\mathcal{K}}^{-1}\mathcal{K})_{13}=0$, $({\mathcal{K}}^{-1}\mathcal{K})_{23}=0$  and $({\mathcal{K}}^{-1}\mathcal{K})_{33}=I$.} 
\end{proof}

\section{Concluding remarks and future work}\label{sec4}
The conditions on invertibility provided in this work may be useful to  understand under what circumstances double saddle-point systems of the form \eqref{eq:system} can be solved. From a theoretical point of view, this is a necessary step in the analysis of solvability and other algebraic properties of such systems. From a numerical standpoint, the formulas of the inverses and their possible decompositions, may be useful within the context of developing preconditioned iterative solvers based on sparse approximate inverses. The fact that some of the blocks of the inverse are zero under appropriate rank conditions is potentially useful for deriving preconditioners with a specific block structure. Such preconditioners may be best utilized if additional information on the underlying application beyond the algebraic structure of the blocks is available. \fb{Specifically, developing efficient preconditioners and analyzing the spectrum of the corresponding preconditioned matrices by exploiting the expressions for the inverse of $\mathcal{K}$ (Eqs.\  \eqref{invdec} and \eqref{inv2}) or the inverse of its two-by-two sub-block (Eq.\  \eqref{inv}) to accelerate Krylov subspace methods is currently under investigation.}

\section*{Acknowledgments}
The first author thanks the hospitality of Simon Fraser University's Department of Mathematics and the University of British Columbia's Department of Computer Science during her visit, where this work was completed. 
The work of the second author was supported by a Natural Sciences and Engineering Research Council (NSERC) of Canada Discovery grant (RGPIN-2023-05244).
The work of the third author was supported by a Natural Sciences and Engineering Research Council (NSERC) of Canada Discovery grant (RGPIN-2020-04663). Our sincere thanks  to two anonymous referees for their helpful suggestions and constructive comments.

\bibliographystyle{abbrvnat}

\bibliography{refs.bib}

\end{document}